\newtheorem{theorem}{Theorem}
\newtheorem{corollary}[theorem]{Corollary}
\newtheorem{lemma}[theorem]{Lemma}
\newtheorem{proposition}[theorem]{Proposition}
\newtheorem{example}{Example}
{\theoremstyle{definition}
	\newtheorem{definition}{Definition}
	\newtheorem{remark}{Remark}

	}
\newcommand{\N}{\ensuremath{\mathbb N}} 
\newcommand{\R}{\ensuremath{\mathbb R}} 
\newcommand{\E}{\ensuremath{\mathbb E}}
\title[Asymptotic covariances for functionals of random fields]{Asymptotic covariances for functionals of weakly stationary random fields}
\author{Leonardo Maini}
\address{Leonardo Maini, Universit\'e du Luxembourg, 
	Unit\'e de Recherche en Math\'ematiques,
	Maison du Nombre,
	6 avenue de la Fonte,
	L-4364 Esch-sur-Alzette,
	Grand Duch\'e du Luxembourg}
\email{leonardo.maini@uni.lu}
\title{Asymptotic covariances for functionals of weakly stationary random fields}
\date{\today}
\begin{document}
	\maketitle

	\medskip

	\begin{abstract}
		Let $(A_x)_{x\in\R^d}$ be a measurable, weakly stationary random field, i.e. $\E[A_x]=\E[A_y]$, ${\rm Cov}(A_x,A_y)=K(x-y)$, $\forall x,y\in\R^d$, with covariance function $K:\R^d\rightarrow\R$. 
		
		Assuming only that the \textit{integral covariance function} $w_t:=\int_{\{|z|\le t\}}K(z)dz$ is regularly varying (which encompasses the classical assumptions found in the literature), we compute $$\lim_{t\rightarrow\infty}{\rm Cov}\left(\frac{\int_{tD}A_x dx}{t^{d/2}w_t^{1/2}}, \frac{\int_{tL}A_y dy}{t^{d/2}w_t^{1/2}}\right)$$ for $D,L\subseteq \mathbb{R}^d$ belonging to a certain class of compact sets. 
		
		As an application, we combine this result with existing limit theorems to obtain multi-dimensional limit theorems for non-linear functionals of stationary Gaussian fields, in particular proving new results for the Berry's random wave model. At the end of the paper, we also show how the problem for $A$ with a general continuous covariance function $K$ can be reduced to the same problem for a radial, continuous covariance function $K_{\text{iso}}$.
		
		The novel ideas of this work are mainly based on regularity conditions for (cross) covariograms of Euclidean sets and standard properties of regularly varying functions.
	\end{abstract}


\section{Introduction}
Fix a probability space $(\Omega,\mathcal{F},\mathbb{P})$, a dimension $d\ge1$ and let $A=(A_x)_{x\in\R^d}$ be a \textbf{measurable} random field on $(\Omega,\mathcal{F},\mathbb{P})$, that is,
	\begin{align*}
		A\colon\left(\Omega,\mathcal{F}\right)\times \left(\R^d, \mathcal{B}\left(\R^d\right)\right)\longrightarrow \left(\R, \mathcal{B}\left(\R\right)\right), \quad \quad\quad \quad(\omega,x) \longmapsto A_x(\omega),
	\end{align*}
is a measurable function, where $\mathcal{B}(\R^d)$ denotes the Borel sigma-algebra on $\R^d$. In addition, assume that $A$ is \textbf{weakly stationary}, i.e.
\begin{equation*}
	\label{2oc}
	\E\left[A_x\right]=\E[A_y], \quad\quad{\rm Cov}(A_x,A_y)=K(x-y),\quad \quad \forall x,y\in\R^d,
\end{equation*}
with \textbf{covariance function} $K:\R^d\rightarrow\R$. Note that under the above assumptions $K$ is measurable and bounded, hence locally integrable. Therefore, we can define the \textbf{integral covariance function}
$w_{\cdot}:\R_+\rightarrow\R$ as
\begin{equation}
	\label{w}
	w_t:=\int_{\{|z|\le t\}}K(z)dz, \quad \quad t>0,
\end{equation}
where $|\cdot|$ denotes the Euclidean norm on $\mathbb R^d$.

In this paper, we will focus on \textbf{functionals} of $A$ of the form
\begin{equation}
	\label{set indexed}
	\left(\int_{tD} A_x dx\right)_{D\in\mathcal{D}}, \quad \quad \text{as }t\rightarrow\infty,
\end{equation}
where $\mathcal{D}$ is a suitable class of compact sets in $\R^d$. The well-posedness of  \eqref{set indexed} under the above assumptions is ensured by Proposition \ref{wellposedness}.

The asymptotic behavior of \eqref{set indexed} has been extensively investigated in probability, and plays an important role in many applications. In Statistics, for instance, functionals of the form $\int_{tD}A_x dx$ are often involved in parameter estimation, and $tD$ plays the role of an increasing observation window, see e.g. \cite{DOV,MY}. 

Moreover, note that \eqref{set indexed} may represent non-linear functionals of stationary Gaussian fields, that will be discussed in more detail in Section \ref{nnfGF}. The latter corresponds to the choice $A_x=\varphi(B_x)$, where $B=(B_x)_{x\in\R^d}$ is a stationary Gaussian field and $\varphi:\R\rightarrow \R$ is a function which varies depending on the framework. Just to mention two examples (among many), $\varphi$ could be a polynomial in view of statistical applications, see e.g. \cite{RST,TV}, or an indicator function if one is interested in the geometry (and in particular the excursion volumes) of $B$, see e.g. \cite{LO,LRM,MN}. \\

The goal of this paper will be to give the minimal assumptions on $K$ and $\mathcal{D}$ in order to compute exactly  the \textbf{asymptotic covariances} of \eqref{set indexed}, i.e. the limit
\begin{equation}\label{problem}
	\lim_{t\rightarrow\infty}{\rm Cov}\left(\frac{\int_{tD}A_x dx}{r_t}, \frac{\int_{tL}A_y dy}{r_t}\right), \quad \quad D,L\in\mathcal{D},
\end{equation}
where $r_t\rightarrow\infty$ is chosen so that the limit \eqref{problem} exists finite (and not identically equal to $0$ for all $D,L\in\mathcal{D}$) for every $D,L\in\mathcal{D}$.
 The problem (\ref{problem}) often arises when studying the fluctuations of (\ref{set indexed}), in particular in the case  of non-linear functionals of stationary Gaussian fields (see Section \ref{nnfGF}). Indeed, the first step to prove limit theorems for \eqref{set indexed}, i.e.
 \[
 \frac{\int_{tD}(A_x-\E[A_x])dx}{r_t}\overset{{\rm d}}{\rightarrow}\,Z(D), \quad \quad \text{as $t\rightarrow\infty$},
 \]
  where $Z(D)$ is a limiting random variable, is usually to study the asymptotic variance of $\int_{tD}A_x dx$, which corresponds to \eqref{problem} when $D=L$. Moreover, to extend such results to a multi-dimensional setting, i.e.
  \[
  \left(\frac{\int_{tD_1}(A_x-\E[A_x])dx}{r_t},\dots,\frac{\int_{tD_n}(A_x-\E[A_x])dx}{r_t}\right)\overset{{\rm d}}{\rightarrow}\,(Z(D_1),\dots,Z(D_n)), \quad \quad \text{as $t\rightarrow\infty$},
  \]
    it is often necessary to compute \eqref{problem} for $D=D_i$ and $L=D_j$, for all $i,j=1,\dots,n$.\\
  
  A number of different methods have been used in the literature to compute \eqref{problem}, such as: 
 \begin{itemize}
 	\item Spectral representations and Fejer-type kernels or approximate identity for convolutions, see e.g. \cite{ALS}.
 	\item Spectral representations and Abelian-Tauberian theorems, see e.g. \cite{L}.
 	\item When $K$ is radial, the method of geometric probabilities, see e.g. \cite{LRM}.
 	\item A direct approach. By Fubini's theorem one has \footnote{Here the equality is ensured by Proposition \ref{wellposedness}.}
 \begin{equation}
 	\label{covfirsteq}
 		{\rm Cov}\left(\int_{tD}A_x dx, \int_{tL}A_y dy\right)=\int_{tD}\int_{tL}K(x-y)dxdy.
 \end{equation}
 \end{itemize}
 In this paper, we will focus on this latter direct approach, combining it with regularity conditions for cross covariograms developed in \cite{Galerne} and standard properties of regularly varying functions. This method will allow to compute \eqref{problem} under assumptions that encompass the classical ones found in the literature. 

\subsection{Classical assumptions, existing literature, motivating examples}\label{secmotex}
To highlight the main novelty introduced by this paper, let us look at the classical assumptions considered in the literature.

The \textbf{first classical assumption}, probably the most popular in the literature (see e.g. \cite{Ben Hariz,BM,CNN,KS,LO,NPP}), is
\begin{equation}
	\label{Kintegrable}
	K\in L^1(\R^d),
\end{equation}
see also Section \ref{sectionBM}. In this case, it is well known that for every $D,L\subseteq \R^d$ compact, choosing $r_t=t^{d/2}$, we have
\begin{equation}
	\label{Kintegrable ascov}
	\lim_{t\rightarrow\infty}{\rm Cov}\left(\frac{\int_{tD}A_x dx}{t^{d/2}}, \frac{\int_{tL}A_y dy}{t^{d/2}}\right)=\lim_{t\rightarrow\infty}\int_{\R^d} K(z)\left(\int_{D\cap L+\frac{z}{t}}dx\right) dz={\rm Vol}(D\cap L)\int_{\R^d}K(z)dz,
\end{equation}
where the latter follows by \eqref{covfirsteq}, a change of variable $z=x-y$ and dominated convergence.
Then, we have two possibilities:
\begin{itemize}
	\item If $\int_{\R^d}K(z)dz=0$, then  $r_t=t^{d/2}$ is not a correct choice for computing the asymptotic covariances, since the limit is identically $0$ for every $D,L$ compact (see also Example \ref{ex:bm0}). 
	\item If $\int_{\R^d}K(z)dz\neq 0$, then $r_t=t^{d/2}$ is a correct choice for computing the asymptotic covariances \eqref{problem}, as well as $r_t=t^{d/2} w_t^{1/2}$ (recall the definition \eqref{w} of $w$ and note that $K\in L^1(\R^d)$), and we have
	\begin{equation}
		\label{Kintegrable cov adapted}
			\lim_{t\rightarrow\infty}{\rm Cov}\left(\frac{\int_{tD}A_x dx}{t^{d/2}w_t^{1/2}}, \frac{\int_{tL}A_y dy}{t^{d/2}w_t^{1/2}}\right)={\rm Vol}(D\cap L).
	\end{equation}
	
\end{itemize}

The \textbf{second classical assumption}, which covers some of the cases where $K\notin L^1(\R^d)$, is $K$ radial and regularly varying with index $-\beta\in(-d,0)$ (see e.g. \cite{Anh,DM,LO,Rosenblatt,Taqqu}), namely
\begin{equation}
	\label{K rrv}
	K(z)=k(|z|)=\frac{\ell(|z|)}{|z|^{\beta}},\quad \quad \beta\in(0,d),
\end{equation}
for some $k:\R_+\rightarrow \R$, where $|\cdot|$ is the Euclidean norm in $\R^d$ and $\ell:\R_+\rightarrow\R$ is a slowly varying function, (i.e. $\ell$ is definitively positive and $\ell(rs)/\ell(r)\rightarrow 1$ for $s>0$, as $r\rightarrow\infty$, see the seminal book \cite{Bingham} or Section \ref{regularly varying} for more details). 
In this case, it is a standard fact that for every $D,L\subseteq \R^d$ compact, choosing $r_t=t^{d}k(t)^{1/2}=t^{d-\frac{\beta}{2}}\ell(t)^{1/2}$, we have
\begin{equation}
	\label{K rrv ascov}
	\lim_{t\rightarrow\infty}{\rm Cov}\left(\frac{\int_{tD}A_x dx}{t^{d}k(t)^{1/2}}, \frac{\int_{tL}A_y dy}{t^{d}k(t)^{1/2}}\right)=\lim_{t\rightarrow\infty}\int_{D}\int_L \frac{k(t|x-y|)}{k(t)} dx dy=\int_D \int_L |x-y|^{-\beta}dxdy,
\end{equation}
where the latter follows by \eqref{covfirsteq}, Theorem \ref{potter} and dominated convergence (see also Remark \ref{just}). This means that $r_t=t^{d}k(t)^{1/2}$ is a correct choice to compute the asymptotic covariances \eqref{K rrv ascov}. Moreover, by Theorem \ref{nonviceversa} we have \footnote{Here $\omega_0=2$ and $\omega_{d-1}$ is the surface area of the sphere $S^{d-1}$ in $\R^d$ when $d\ge2$.}
\[
w_t=\omega_{d-1}\int_0^t k(r)r^{d-1}dr\sim\frac{\omega_{d-1}}{(d-\beta)}t^d k(t)=\frac{\omega_{d-1}}{(d-\beta)}t^{d-\beta} \ell(t), \quad \quad \text{as $t\rightarrow\infty$},
\]
which implies that $r_t=t^{d/2}w_t^{1/2}$ is also a correct choice, and
\begin{equation}
		\label{K rrv ascov adapted}
		\lim_{t\rightarrow\infty}{\rm Cov}\left(\frac{\int_{tD}A_x dx}{t^{d/2}w_t^{1/2}}, \frac{\int_{tL}A_y dy}{t^{d/2}w_t^{1/2}}\right)=\frac{(d-\beta)}{\omega_{d-1}}\int_D \int_L |x-y|^{-\beta}dxdy.
\end{equation}

Summarizing, and  excluding the case $K\in L^1(\R^d)$, $\int_{\R^d}K(z)dz=0$, we have:
\begin{enumerate}
	\item Under both assumptions, $r_t=t^{d/2}w_t^{1/2}$ is a correct choice for computing the asymptotic covariances.\label{rem1}
	\item Under both assumptions $w_t$ is a regularly varying function, with index: $\alpha=0$ if $K\in L^1(\R^d)$, $\int_{\R^d}K(z)dz\neq0$; $\alpha=d-\beta\in(0,d)$ if $K$ is radial and regularly varying with index $-\beta\in(-d,0)$.\label{rem2}
\end{enumerate}
The intuition we can develop from (i)-(ii) is: if $w_t$ is regularly varying, then $r_t=t^{d/2}w_t^{1/2}$ is a correct choice for computing \eqref{problem}. This fact  will be proved in our main result, Theorem \ref{isoasymptotics}, together with an explicit expression for the asymptotic covariances (see \eqref{covgen}), which generalizes \eqref{Kintegrable cov adapted} and \eqref{K rrv ascov adapted} for $D,L$ in a suitable class $\mathcal{D}$ of compact domains (see \eqref{case1} and \eqref{case2}). \\

It is important to note that, excluding the case $K\in L^1(\R^d)$, $\int_{\R^d}K(z)dz=0$,  our assumption "$w_t$ regularly varying" (see \eqref{rv}) in Theorem \ref{isoasymptotics} is \textbf{strictly more general than the two forementioned classical assumptions}. This is the main novelty provided by the present paper.

In fact, in many cases (see e.g. \cite[Lemma 2.6]{GMT}, \cite[Example 5]{L}, or \cite[Section 6]{MN}) the covariance function $K$ is neither integrable nor radial and regularly varying, but the integrated covariance function $w$ is regularly varying. Consider, for instance, the situations where $K$ is not absolutely integrable, but $\lim_{t\rightarrow\infty}w_t=\int_{\R^d}K(z)dz \in(0,\infty)$; or $w_t\sim t^{\alpha}$, $\alpha>0$, but $K$ is not radial and regularly varying.
Some explicit examples will be given in Section \ref{sectionberry}.\\

The case $K\in L^1(\R^d)$, $\int_{\R^d}K(z)dz=0$ was not considered in the discussion above, since even if it falls under the first classical assumption, one is not able to compute the asymptotic covariances with \eqref{Kintegrable ascov}. However, if $w$ is regularly varying, Theorem \ref{isoasymptotics} allows to derive an expression for the asymptotic covariances, with the rate $r_t=t^{d/2}w_t^{1/2}$. An explicit example (see Example \ref{ex:bm0}) will be given in Section \ref{sectionBM}.\\
 
As mentioned above, the main novelty provided by Theorem \ref{isoasymptotics} is the computation of \eqref{problem} in many cases which do not fall under the two classical assumptions.

However, as we will better explain in Section \ref{nnfGF}, Theorem \ref{isoasymptotics} can also be combined with the Peccati-Tudor theorem \cite{PT} to prove \textbf{new multi-dimensional central limit theorems for non-linear functionals of Gaussian fields}.

An example of this fact, that stands out for its importance in quantum mechanics, is the \textbf{Berry's random wave model}, a Gaussian field whose functionals have been extensively studied in the literature (see Section \ref{sectionberry} and the references therein). Recently, it was proved in \cite{MN} that a large class of functionals of this field (and of many other fields satisfying a specific spectral condition) in the form (\ref{set indexed phi})  have Gaussian fluctuations. Nevertheless, the authors were not able to extend their result to a multi-dimensional central limit theorem (see also \cite[Section 1.4]{MN}). This paper also aims to partially fill this gap. Indeed, as we will better explain in Section \ref{sectionberry}, by means of Theorem \ref{isoasymptotics} we will prove new multi-dimensional central limit theorems for the Berry's random wave model (see in particular Example \ref{Berry4} and Example \ref{Berry2}), using the fact that $w$ is regularly varying (i.e. (\ref{rv}) holds) even if $K$ does not always satisfy the forementioned classical assumptions.

\subsection{Main result}
In order to state our main result, we need to introduce some notations, quantities and assumptions.

We denote by ${\rm "Vol"}$ the Lebesgue measure and by  ${\rm "Per"}$ the perimeter in generalized sense in $\R^d$, defined for $D\subseteq \R^d$ measurable as (see \cite{Galerne})
\[
{\rm Per}(D) =\sup\left\{\int_{D}{\rm div }\phi(x)\,dx :\phi\in C^1_c(\R^d,\R^d), \|{\phi}\|_\infty\le 1\right\},
\]
where $C^1_c(\R^d,\R^d)$ is the set of continuously differentiable functions with compact support. Note that, denoting by $\mathcal{H}^{d-1}(\partial D)$ the $(d-1)$-Hausdorff measure of the topological boundary $\partial D$, we have ${\rm Per}(D)\le \mathcal{H}^{d-1}(\partial D)$, which becomes an equality under additional assumptions on $D$ (see \cite{Galerne} for more details).

Furthermore, for $f:\R^d\rightarrow \R$ and $r>0$, $$\int_{S^{d-1}}f(r\theta)d\theta$$is the integral of $\theta\mapsto f(r\theta)$ (when defined) with respect to the uniform measure on $S^{d-1}$ and $\omega_{d-1}=\int_{S^{d-1}}d\theta$ is the surface measure of $S^{d-1}$. (For $d=1$ we use the formalism
\begin{equation}
	\label{not1}
	\int_{S^0}f(r\theta)d\theta=f(r)+f(-r), \quad\quad \omega_0=2.\text{)}
\end{equation}
For $D,L\in\mathcal{D}$, we consider the \textbf{cross covariogram} $g_{D,L}:\R^d\rightarrow\R$,
\begin{equation}
	\label{g}
	g_{D,L}(z)={\rm Vol}(D\cap (L+z)),
\end{equation}
which, by Proposition \ref{prop crosscov}, is such that all its directional derivatives $$\frac{d}{dl}g_{D,L}(l\theta), \quad\theta\in S^{d-1},$$ are well-defined in $L^1(\R_+)$ and bounded by ${\rm Per}(D)\wedge{\rm Per}(L)$.\\

Finally, we will always assume to be in one of the following cases:

\medskip

\noindent
{\it \textbf{Case 1.}} $K$ is \textbf{radial} and $\mathcal{D}$ is the class of all compact sets in $\R^d$ with \textbf{finite perimeter}, i.e. for some $k:\R_+\rightarrow\R$ we have
\begin{equation}
	\label{case1}
	K(z)=k(|z|), \quad \mathcal{D}=\{D\subseteq \R^d : D \text{ compact, }{\rm Per}(D)<\infty \}.
\end{equation}
If $d=1$, $K$ is always radial. 

\medskip

\noindent
{\it \textbf{Case 2.}} $\mathcal{D}=\mathcal{D}_{x_0}$ is the class of the \textbf{closed balls centered} at a fixed point  $x_0\in\R^d$, namely 
\begin{equation}
	\label{case2}
	\mathcal{D}=\left\{\left\{x\in\R^d : |x-x_0|\le r\right\}: r\in \R_+\right\}.
\end{equation}

\medskip
We are now ready to state our main result.
\begin{theorem}
	\label{isoasymptotics}
	Let $\mathcal{D}$ be a collection of compact sets in $\R^d$ and $A=(A_x)_{x\in\R^d}$ be a measurable, weakly stationary random field with covariance function $K:\R^d\rightarrow\R$. Furthermore, let $K$ and $\mathcal{D}$ satisfy the assumption (\ref{case1}) or the assumption (\ref{case2}), and assume that $w$ in (\ref{w}) is \textbf{regularly varying} with index $\alpha\in(-1,d]$, that is
	\begin{equation}
		\label{rv}
		w_t=\ell(t)t^{\alpha} \quad \forall t\in\R_+
	\end{equation}
	where $\ell:\R_+\rightarrow\R$ is  slowly varying (see Definition \ref{regularly varying} and Lemma \ref{lemma add meas lin}). 
	
	Then, for all  $D,L\in \mathcal{D}$, we have as $t\rightarrow\infty$ 
	\begin{equation}
		\label{covgen}
		{\rm Cov}\left(\frac{\int_{tD}A_x dx}{t^{d/2}w_t^{1/2}}, \frac{\int_{tL}A_x dx}{t^{d/2}w_t^{1/2}}\right)\to \frac{1}{\omega_{d-1}}\int_{S^{d-1}}d\theta\int_0^\infty dl\left(-\frac{d}{dl}g_{D,L}(l\theta)\right)l^{\alpha}.
	\end{equation}
\end{theorem}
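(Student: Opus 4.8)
The plan is to reduce the covariance to a single integral against $K$, rescale so that the cross covariogram appears, pass to polar coordinates and integrate by parts in the radial variable so that the radial derivative $\frac{d}{dl}g_{D,L}(l\theta)$ is exposed, and then invoke regular variation together with a uniform integrability argument. First, by centering, bilinearity and Fubini, and then the change of variables $z=x-y$,
\[
{\rm Cov}\left(\int_{tD}A_x\,dx,\int_{tL}A_y\,dy\right)=\int_{tD}\!\int_{tL}K(x-y)\,dy\,dx=\int_{\R^d}K(z)\,{\rm Vol}\bigl(tD\cap(tL+z)\bigr)\,dz.
\]
The scaling identity $tD\cap(tL+z)=t\bigl(D\cap(L+z/t)\bigr)$ gives ${\rm Vol}(tD\cap(tL+z))=t^d g_{D,L}(z/t)$, so after dividing by the normalizing factor $t^d w_t$ I obtain the clean representation
\[
{\rm Cov}\left(\frac{\int_{tD}A_x\,dx}{t^{d/2}w_t^{1/2}},\frac{\int_{tL}A_y\,dy}{t^{d/2}w_t^{1/2}}\right)=\frac{1}{w_t}\int_{\R^d}K(z)\,g_{D,L}(z/t)\,dz.
\]

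Next I would pass to polar coordinates $z=r\theta$ (so $dz=r^{d-1}\,dr\,d\theta$). By Proposition \ref{prop crosscov}, for each fixed $\theta\in S^{d-1}$ the map $l\mapsto g_{D,L}(l\theta)$ is locally absolutely continuous with $L^1$ derivative bounded by ${\rm Per}(D)\wedge{\rm Per}(L)$, and it is compactly supported (the sets are bounded, so $g_{D,L}(l\theta)=0$ for $l$ larger than the diameter), whence $g_{D,L}\bigl((r/t)\theta\bigr)=\int_{r/t}^{\infty}\phi_\theta(l)\,dl$ with $\phi_\theta(l):=-\frac{d}{dl}g_{D,L}(l\theta)$. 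Inserting this and applying Fubini on the region $\{0\le r,\ l\ge r/t\}=\{l\ge0,\ 0\le r\le tl\}$ turns the radial integral into
\[
\frac{1}{w_t}\int_{\R^d}K(z)\,g_{D,L}(z/t)\,dz=\frac{1}{w_t}\int_{S^{d-1}}d\theta\int_0^\infty \phi_\theta(l)\,W_\theta(tl)\,dl,\qquad W_\theta(\rho):=\int_0^\rho K(s\theta)s^{d-1}\,ds.
\]
Now the two cases serve precisely to convert $W_\theta$ back into $w$. In Case 1, $K$ radial forces $W_\theta=W$ independent of $\theta$, with $w_\rho=\omega_{d-1}W(\rho)$, so the expression equals $\frac{1}{\omega_{d-1}}\int_{S^{d-1}}d\theta\int_0^\infty\phi_\theta(l)\frac{w_{tl}}{w_t}\,dl$. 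In Case 2, the covariogram of concentric balls depends only on $|z|$, hence $\phi_\theta=\phi$ is $\theta$-independent; integrating first over the sphere and using $\int_{S^{d-1}}W_\theta(\rho)\,d\theta=\int_{\{|z|\le\rho\}}K(z)\,dz=w_\rho$ yields $\int_0^\infty\phi(l)\frac{w_{tl}}{w_t}\,dl$, which coincides with the previous form since $\frac{1}{\omega_{d-1}}\int_{S^{d-1}}d\theta=1$.

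In both cases everything reduces to letting $t\to\infty$ in $\int_0^\infty\phi_\theta(l)\frac{w_{tl}}{w_t}\,dl$ (with a harmless extra bounded integration over the compact sphere in Case 1). By the definition of regular variation, $w_{tl}/w_t\to l^\alpha$ for each $l>0$, so the integrand converges pointwise to $\phi_\theta(l)\,l^\alpha$, which is integrable: $\phi_\theta$ is bounded and compactly supported, and $l^\alpha$ is integrable near $0$ precisely because $\alpha>-1$. The main obstacle is to justify interchanging limit and integral, i.e.\ to dominate $\phi_\theta(l)\,w_{tl}/w_t$ uniformly in $t$ near $l=0$, where regular variation alone gives no control since $tl$ need not be large. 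I would resolve this by splitting the radial integral at the scale $l=1/t$: for $tl\le1$ the elementary bound $|w_\rho|\le \tfrac{\omega_{d-1}}{d}K(0)\rho^d$ (from $|K|\le K(0)$) makes that contribution of order $1/(tw_t)\to0$, while for $tl\ge1$ Potter's bounds for the regularly varying $w$ furnish $w_{tl}/w_t\le C\,l^{\alpha-\delta}$ on $(0,1]$ with $\delta>0$ chosen so small that $\alpha-\delta>-1$, giving a $t$-independent integrable majorant $C({\rm Per}(D)\wedge{\rm Per}(L))\,l^{\alpha-\delta}\mathbf 1_{l\le R}$ together with $\phi_\theta$'s bound and compact support for large $l$. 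Dominated convergence (or Vitali's theorem) then gives $\int_0^\infty\phi_\theta(l)\frac{w_{tl}}{w_t}\,dl\to\int_0^\infty\phi_\theta(l)\,l^\alpha\,dl$, and integrating the bounded $\theta$-dependence over $S^{d-1}$ produces exactly the right-hand side of \eqref{covgen}.
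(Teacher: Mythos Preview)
Your proof is correct and follows essentially the same route as the paper: the covariogram representation via Fubini and rescaling, polar coordinates combined with \eqref{covbyparts} to expose $-\frac{d}{dl}g_{D,L}(l\theta)$, the case distinction (radial $K$ versus radial $g_{D,L}$) to reduce $W_\theta(tl)$ to $w_{tl}$, and finally Potter's theorem plus dominated convergence to pass to the limit. The paper packages the last step as a separate corollary (Corollary~\ref{coregvar}) proving $\int_0^H|w_{tl}/w_t|\,dl\to\int_0^H l^\alpha\,dl$ and then invokes the generalized dominated convergence theorem, but the underlying argument is exactly your split-and-dominate. One small cosmetic point: Potter's bound requires $tl>X$ rather than $tl\ge 1$, so the split should be at $l=X/t$ (with $X$ the Potter threshold) rather than at $l=1/t$; your crude bound $|w_\rho|\le \tfrac{\omega_{d-1}}{d}K(0)\rho^d$ handles the enlarged small-$l$ piece just as well.
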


\smallskip

\begin{remark}{\textbf{(Restriction of the parameter $\mathbf{\alpha}$)}} 
	Note that in Theorem \ref{isoasymptotics} we consider $\alpha\in(-1,d]$ instead of $\alpha\in\mathbb{R}$. This happens for different reasons:
	\begin{itemize}
		\item $\alpha>-1$ is assumed for technical reasons. For example, if $D=L=\{|z|\le1\}\subseteq \R^d$, we have for all $\theta\in S^{d-1}$ and some constant $c_d>0$
		\[
		g_{D,D}(l\theta)=c_d \int_{\frac{l}{2}}^1 \left(1-r^2\right)^{\frac{d-1}{2}} dr \,\, \mathbf{1}_{[0,2]}(l), \quad \quad\quad -\frac{d}{dl}g_{D,D}(l\theta)=\frac{c_d}{2}\left(1-\left(\frac{l}{2}\right)^2 \right)^{\frac{d-1}{2}}\mathbf{1}_{[0,2]}(l),
		\]
		implying  that the RHS of (\ref{covgen}) is not a finite integral if $\alpha\le -1$. Anyway, if (\ref{rv}) holds with $\alpha\le -1$, another scaling could yield a different limit.
		\smallskip
		\item Since $|K(z)|\le K(0)<\infty$ for every $z\in\mathbb R^d$, we have
		\[
		|w_t|\le\int_{\{|z|\le t\}}|K(z)|dz\le {\rm const}\,K(0)t^d,
		\]
		implying that $w_t$ can not be regularly varying with index $\alpha>d$.
		
	\end{itemize}
\end{remark}
\begin{remark}{\textbf{(Comparison to known covariance structures)}} 
	\label{comparison}Before going on, it is worth discussing analogies and differences with some known covariance structures, highlighting when (\ref{covgen}) can be seen as their generalization. First of all, note that when $d=1$, $D=[0,s]$ and $L=[0,r]$ with $r>s>0$, then for $l>0$ we have
	\begin{eqnarray}
		g_{D,L}(l)&=&(s-l)\mathbf{1}_{[0,s]}(l)\nonumber\\
		g_{D,L}(-l)&=&s\mathbf{1}_{[0,r-s]}(l)+(r-l)\mathbf{1}_{[r-s,r]}(l) \nonumber\\
		-\frac{d}{dl}(g_{D,L}(l)+g_{D,L}(-l))&=&\mathbf{1}_{[0,s]}(l)+\mathbf{1}_{[r-s,r]}(l) \nonumber
	\end{eqnarray}
	and the RHS of (\ref{covgen}) becomes
	\begin{equation}
		\label{covFBM}
		\frac{1}{2}\left(\int_0^s l^{\alpha}dl+\int_{r-s}^r l^{\alpha}dl \right)=\frac{1}{2(\alpha+1)}\left(s^{\alpha+1}+r^{\alpha+1}-|r-s|^{\alpha+1} \right).
	\end{equation}
	In \eqref{covFBM} we observe the covariance structure of a fractional Brownian motion $(B^H_s)_{s>0}$ with Hurst index $H=\frac{\alpha+1}{2}$. Note that this is also the covariance structure of other (non-Gaussian) stochastic processes, like the Hermite process (see e.g. \cite[Section 3]{tudor-book}).
	
	For $\alpha=0$, the RHS of (\ref{covgen}) has the special form
	\begin{equation}
		\label{cov0}
		{\rm Vol}(D\cap L),
	\end{equation}
	which is the covariance structure of a Gaussian noise (a set-indexed generalization of the Brownian motion, see e.g. \cite[Section 1.4.3]{Adler}). Note that if $D$ is fixed and $u,v>0$, \eqref{covgen} becomes
	\[
	\lim_{t\rightarrow\infty}{\rm Cov}\left(\frac{\int_{tD u^{1/d}}A_x dx}{t^{d/2}}, \frac{\int_{tD v^{1/d}}A_y dy}{t^{d/2}}\right)= {\rm Vol}(D)\,\min\{u,v\},
	\]
	which is, up to a scaling factor, the covariance structure of a Brownian motion (see also \eqref{bmmultispecialcase}). This rescaling is used in the literature to prove multidimensional central limit theorems (to a Brownian motion) for functionals of Gaussian fields, see e.g. \cite[Theorem 2.3.1]{IL} and the discussion in Section \ref{sectionBM}.
	
	For $\alpha>0$, the RHS of (\ref{covgen}) coincides with 
	\begin{equation}
		\label{cov>0}
		\frac{1}{\omega_{d-1}}\int_{S^{d-1}}d\theta\int_0^\infty dl\left(-\frac{d}{dl}g_{D,L}(l\theta)\right)l^{\alpha}=\frac{\alpha}{\omega_{d-1}}\int_{S^{d-1}}d\theta\int_0^\infty dlg_{D,L}(l\theta)l^{\alpha-1}=\frac{\alpha}{\omega_{d-1}}\int_{\R^d}g_{D,L}(z) |z|^{\alpha-d}dz,
	\end{equation} 
	where the first equality follows integrating by parts and the second passing from polar to standard coordinates. Note that
	\[
	\int_{\R^d}g_{D,L}(z) |z|^{\alpha-d}dz=\int_{\R^d}\left(\int_{\R^d}\mathbf{1}_D(x)\mathbf{1}_{L}(x-z)dx\right) |z|^{\alpha-d}dz=\int_D\int_L|x-y|^{\alpha-d}dxdy.
	\] 
	Thus, \eqref{cov>0} is exactly the RHS of \eqref{K rrv ascov adapted} with $\beta=d-\alpha$, which is perfectly consistent with the discussion in Section \ref{secmotex}.
	
	Finally, note that choosing $d\ge2$, $D=[0,\underline{s}]$ and $L=[0,\underline{r}]$, with $\underline{r},\underline{s}\in \R^d$, we have:
	\begin{itemize}
		\item If $\alpha=0$, then  (\ref{cov0}) is the covariance structure of a $d$-dimensional Brownian sheet.
		\item If $\alpha\neq0$, then the RHS of (\ref{covgen}) is not the covariance structure of a $d$-dimensional fractional Brownian sheet (see e.g. \cite{Wang} for the definition).
	\end{itemize}
\end{remark}

\subsection{Plan of the paper}
The rest of the paper is organized as follows. In Section \ref{prelim} we ensure the well-posedness of \eqref{set indexed}, prove \eqref{covfirsteq} and give some preliminaries about cross covariograms and regularly varying functions. In Section \ref{exact asymptotics} we prove Theorem \ref{isoasymptotics}. In Section \ref{nnfGF} we apply Theorem \ref{isoasymptotics} to prove multi-dimensional central limit theorems for non-linear functionals of stationary Gaussian fields, giving several examples. Finally, in Section \ref{sectionfinal} we show that $\eqref{problem}$ for $A$ with continuous covariance function $K$ can be reduced to the same problem for a radial, continuous covariance function $K_{\text{iso}}$, providing a class of non-Gaussian, non-stationary, weakly stationary random fields where this reduction principle can be applied.

\section{Preliminaries for the proof of Theorem \ref{isoasymptotics}}
\label{prelim}
The goal of this section is to ensure the well-posedness of \eqref{set indexed}, prove \eqref{covfirsteq} and give preliminary results on cross covariograms and regularly varying functions. 
\subsection{Well-posedness of \eqref{set indexed} and proof of \eqref{covfirsteq}}
The following proposition explains why \eqref{set indexed} is  well posed and \eqref{covfirsteq} holds.
\begin{proposition}
	\label{wellposedness}
	Fix $d\ge1$ and let $A=(A_x)_{x\in\R^d}$ be a measurable (in the sense that $(\omega,x)\mapsto A_x(\omega)$ is measurable), weakly stationary random field, with covariance function $K:\R^d\rightarrow \R$. Then:
	\begin{enumerate}
		\item We have
		\[
		\mathbb{P}\left(\int_{D}|A_x|dx<\infty, \quad \forall \text{ $D\subseteq \R^d$ compact}\right)=1
		\]which implies that \eqref{set indexed} is well defined.
		\item For every $D,L\subseteq \R^d$ compact we have
		\[
		{\rm Cov}\left(\int_{D}A_x dx,\int_{L}A_ydy\right)=\int_D \int_L K(x-y)dxdy,
		\]
		which implies \eqref{covfirsteq}.
	\end{enumerate}
\end{proposition}
\begin{proof}
	For simplicity, since $A$ is weakly stationary, let us use the notation $m:=\E[A_x]$, $\forall $ $x\in\R^d$.
	Note that the function $(\omega,x,y)\mapsto |A_x(\omega)-m||A_y(\omega)-m|$ is measurable because we are assuming that $(\omega,x)\mapsto A_x(\omega)$ is measurable. Moreover, we have $$|A_x-m||A_y-m|\le \frac{1}{2}\left(|A_x-m|^2+|A_y-m|^2\right),$$
	which implies 
	\begin{equation}
		\label{niceona}
		\E\left[\int_D\int_L\left|A_x-m\right||A_y-m|dxdy\right] \le \frac{1}{2}\left({\rm Var}(A_x)+{\rm Var}(A_y)\right){\rm Vol}(D){\rm Vol}(L)= K(0){\rm Vol}(D){\rm Vol}(L)<\infty.
	\end{equation}
	In particular, for $D\subseteq \R^d$ compact we have 
	\[
	\E\left[\int_{D}|A_x-m| dx\right]\le \E\left[\left(\int_{D}|A_x-m| dx\right)^2\right]^{1/2}=\E\left[\int_D\int_D\left|A_x-m\right||A_y-m|dxdy\right]^{1/2} <\infty,
	\] 
	implying that for almost every $\omega\in \Omega$ the function $x\mapsto A_x(\omega)$ is integrable on the compact $D$. To prove that for almost every $\omega\in\Omega$ the function $x\mapsto A_x(\omega)$ is integrable on every compact set of $\R^d$ (i.e. the almost sure local integrability stated in (i)), note that as $n\rightarrow\infty$ $$\Omega_n:=\left\{\omega\in\Omega: \int_{\{|x|\le n\}}|A_x(\omega)|dx<\infty \right\} \searrow \Omega_\infty:=\left\{\omega\in\Omega : \int_{D}|A_x(\omega)|dx<\infty, \forall D\subseteq \R^d \text{ compact}\right\} .$$
	Thus, since $\mathbb{P}(\Omega_n)=1$ for every $n\in\N$, we have $\mathbb{P}(\Omega_\infty)=1$ and (i) is proved.
	
	Since $\eqref{set indexed}$ is now well defined by (i) and \eqref{niceona} holds, by Fubini-Tonelli we obtain
	\begin{align*}
		{\rm Cov}\left(\int_{D}A_x dx,\int_{L}A_ydy\right)&=\E\left[\int_D\int_L(A_x-m)(A_y-m)dxdy\right]\\&=\int_D\int_L{\rm Cov}(A_x,A_y)dxdy=\int_D \int_L K(x-y)dxdy,
	\end{align*}
	for every $D,L$ compact domains, which concludes the proof of (ii).
\end{proof}

\subsection{Cross covariograms}
First of all, let us recall the definition (\ref{g}) of the cross covariogram $g_{D,L}$ of two compact sets $D,L$ in $\R^d$. In particular, when $D=L$, denote by $g_D$ the \textbf{covariogram}  of $D$
\begin{equation*}
	g_D(z):=g_{D,D}(z)={\rm Vol}(D\cap (D+z)),\quad \quad \quad z\in\R^d.
\end{equation*}
Moreover, for a bounded set $E\subseteq \R^d$, recall the definition of diameter
\begin{equation}
	\label{h}
	{\rm diam}(E):=\sup_{x,y\in E}|x-y|<\infty.
\end{equation}
Let us now list some properties related to cross covariograms, which are easily derived from the results in \cite{Galerne} and will be needed in the sequel.

\begin{proposition}\label{prop crosscov}
	Consider $D,L$ compact sets in $\R^d$. Then we have:
	\begin{enumerate}
		\item $g_{D,L}(z)=g_{L,D}(-z)$ and in particular $g_D$ is symmetric.
		\smallskip
		\item For every $z,h\in\R^d$
		\[
		|g_{D,L}(z+h)-g_{D,L}(z)|\le 2 \min\{g_D(0)-g_D(h),g_L(0)-g_L(h)\}.
		\]
		\item $g_{D,L}(z)=0$ if $|z|>{\rm diam}(D \cup L)$. In particular, since ${\rm diam}(D \cup L)<\infty$, $g_{D,L}$ has compact support.
		\smallskip
		\item ${\rm Per}(D)<\infty$ if and only if $g_D$ is Lipschitz continuous (with Lipschitz constant ${\rm Per}(D)/2$).
		\smallskip
		\item If ${\rm Per}(D)<\infty$ or ${\rm Per}(L)<\infty$, then $g_{D,L}$ is Lipschitz with Lipschitz constant ${\rm Per}(D)\wedge {\rm Per}(L)$. Moreover, all the directional derivatives of $g_{D,L}$ exist almost everywhere and are bounded by ${\rm Per}(D)\wedge {\rm Per}(L)$. In addition, for every $\theta\in S^{d-1}$ the function $r\rightarrow g_{D,L}(r\theta)$ is absolutely continuous and we have
		\begin{equation}
			\label{covbyparts}
			g_{D,L}(r\theta)=\int_r^{{\rm diam}(D \cup L)}\left(-\frac{d}{dl}g_{D,L}(l\theta)\right)dl, \quad \quad \quad\forall r>0.
		\end{equation}
	\end{enumerate} 
\end{proposition}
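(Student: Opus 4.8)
The plan is to take the five items in the order listed, relying on the single representation
\begin{equation*}
  g_{D,L}(z)=\int_{\R^d}\mathbf{1}_D(x)\,\mathbf{1}_L(x-z)\,dx,
\end{equation*}
which is legitimate because $D,L$ are compact and hence of finite volume. Items (1) and (3) I would dispatch immediately. For (1), the substitution $x=u+z$ turns $g_{D,L}(z)$ into $\int \mathbf{1}_L(u)\mathbf{1}_D(u+z)\,du=g_{L,D}(-z)$, and the special case $D=L$ gives symmetry of $g_D$. For (3), if $D\cap(L+z)\neq\emptyset$ there is $w$ with $w\in D$ and $w-z\in L$, forcing $|z|=|w-(w-z)|\le h(D-L)$; contrapositively $g_{D,L}(z)=0$ whenever $|z|>h(D-L)$. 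Item (4) I would not reprove but quote directly from \cite{Galerne}, as it is the one genuinely deep input (equivalence of finite perimeter with Lipschitz continuity of the covariogram, and the sharp constant ${\rm Per}(D)/2$).

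The one computational bound I would actually carry out is (2). Writing the increment as $g_{D,L}(z+h)-g_{D,L}(z)=\int \mathbf{1}_D(x)\bigl[\mathbf{1}_L(x-z-h)-\mathbf{1}_L(x-z)\bigr]\,dx$, bounding $\mathbf{1}_D\le 1$, and substituting $y=x-z$, I get
\begin{equation*}
  |g_{D,L}(z+h)-g_{D,L}(z)|\le\int_{\R^d}|\mathbf{1}_{L+h}(y)-\mathbf{1}_L(y)|\,dy={\rm Vol}\bigl((L+h)\triangle L\bigr)=2\bigl(g_L(0)-g_L(h)\bigr),
\end{equation*}
using ${\rm Vol}(L+h)={\rm Vol}(L)$ and $g_L(h)={\rm Vol}(L\cap(L+h))$. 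Applying the reflection identity from (1), namely $g_{D,L}(z)=g_{L,D}(-z)$, and repeating the same estimate with the roles of $D$ and $L$ interchanged yields the companion bound $2(g_D(0)-g_D(h))$, whence the minimum of the two.

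For (5) I would combine (2) with (4). If, say, ${\rm Per}(L)<\infty$, then by (4) $g_L$ is Lipschitz with constant ${\rm Per}(L)/2$; since $g_L(h)\le g_L(0)$ always, item (2) gives $|g_{D,L}(z+h)-g_{D,L}(z)|\le 2(g_L(0)-g_L(h))\le {\rm Per}(L)\,|h|$, and taking the smaller of the two sides produces the Lipschitz constant ${\rm Per}(D)\wedge{\rm Per}(L)$. Lipschitz continuity on $\R^d$ then yields, via Rademacher's theorem, differentiability almost everywhere (so all directional derivatives exist a.e.) with derivatives bounded by the Lipschitz constant. Finally, fixing $\theta\in S^{d-1}$, the restriction $r\mapsto g_{D,L}(r\theta)$ is Lipschitz on $\R$, hence absolutely continuous, so the fundamental theorem of calculus applies; since by (3) this function vanishes for $r\ge h(D-L)$, integrating its derivative from $r$ to $h(D-L)$ gives precisely (\ref{covbyparts}).

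The substantive content is concentrated in (4), which I treat as a black box from \cite{Galerne}. On my side, the point requiring genuine care is the bookkeeping in (2): keeping straight which argument is translated so that the symmetric-difference identity ${\rm Vol}((L+h)\triangle L)=2(g_L(0)-g_L(h))$ is correct, and then recovering the $g_D$-side bound through (1) rather than by a second independent computation. The remaining subtlety in (5) is to note that the almost-everywhere directional derivative furnished by Rademacher's theorem genuinely coincides with the derivative of the one-dimensional absolutely continuous restriction $r\mapsto g_{D,L}(r\theta)$; this is standard but worth stating explicitly, since identity (\ref{covbyparts}) is exactly the ingredient that feeds into Theorem \ref{isoasymptotics}.
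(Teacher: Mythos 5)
Your proposal is correct and takes essentially the same route as the paper: (1) and (3) by direct substitution, (2) via the identity ${\rm Vol}\bigl((L+h)\triangle L\bigr)=2\bigl(g_L(0)-g_L(h)\bigr)$ together with the reflection identity to get the $g_D$-side, (4) quoted from \cite{Galerne}, and (5) by combining (2) with (4) and then using Lipschitz $\Rightarrow$ absolutely continuous plus the vanishing from (3) to obtain (\ref{covbyparts}). The only cosmetic differences are that the paper writes the integrand in (2) with a square (equal for indicators) and obtains the a.e.\ existence of directional derivatives directly from the one-dimensional Lipschitz restrictions rather than invoking Rademacher's theorem.
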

\begin{proof}
	(i) follows  from ${\rm Vol}(D\cap (L+z))={\rm Vol}((D-z)\cap L)$
	and (ii) from
	\begin{eqnarray*}
		&&|{\rm Vol}(D\cap (L+z+h))-{\rm Vol}(D\cap (L+z))|\le\int_{\R^d}|\mathbf{1}_{L+z}(x)-\mathbf{1}_{L+z+h}(x)|dx\\ &&=\int_{\R^d}|\mathbf{1}_{L+z}(x)-\mathbf{1}_{L+z+h}(x)|^2dx
		=\int_{\R^d}\mathbf{1}_{L+z}(x)+\mathbf{1}_{L+z+h}(x)-2\mathbf{1}_{L+z}(x)\mathbf{1}_{L+z+h}(x)\,dx\\&&=2(g_L(0)-g_L(h))
	\end{eqnarray*}
	and using (i).
	(iii) simply follows by definition, since $g_{D,L}(z)\neq0$ is possible only if $z\in D-L\subseteq (D\cup L)-(D\cup L)$, and (iv) is proved in \cite[Theorem 14]{Galerne}. Regarding (v), the fact that $g_{D,L}$ is Lipschitz with Lipschitz constant ${\rm Per}(D)\wedge {\rm Per}(L)$ easily follows from (iv) and (ii). As a consequence, for every fixed $\theta\in S^{d-1}$ the function
	\[
	l\mapsto g_{D,L}(l\theta)
	\]
	is Lipschitz, its derivative exists almost everywhere and is bounded by ${\rm Per}(D)\wedge {\rm Per}(L)$. Moreover, the fact that every Lipschitz function is absolutely continuous, together with (iii), implies (\ref{covbyparts}).\\
\end{proof}

\subsection{Regularly varying functions}
\label{regularly varying section}

\begin{definition}
	\label{regularly varying}
	A measurable function $h:\R_+\rightarrow\R$ is said \textbf{regularly varying} if $h$ is positive on $[a,\infty)$ for some $a>0$ and if  we have, for all $l>0$
	\begin{equation*}
		\label{regularly varying equation}
		\frac{h(tl)}{h(t)}\rightarrow g(l)\quad \quad \text{as } t\rightarrow\infty.
	\end{equation*}
	In particular, if $g\equiv 1$ then $h$ is said \textbf{slowly varying}. 
\end{definition}

Definition \ref{regularly varying} is one of the equivalent definitions used for regularly varying functions, as explained in Lemma \ref{lemma add meas lin} (see \cite[Section 1.4]{Bingham}).

\begin{lemma}
	\label{lemma add meas lin} 
	Consider a measurable function $h:\R_+\rightarrow\R$. Then the following statements are equivalent:
	\begin{enumerate}
		\item \label{1}$h$ is regularly varying.
		\item \label{2}$h$ is regularly varying with limit $g(l)=l^{\alpha}$, for some $\alpha\in\R$.
		\item \label{3}$h(l)=\ell(l)l^{\alpha}$ for some $\alpha\in\R$ and $\ell:\R_+\rightarrow\R$ slowly varying.
	\end{enumerate}
	If (one of) the three statements hold, we say that $h$ is \textbf{regularly varying with index} $\mathbf{\alpha}$.
\end{lemma}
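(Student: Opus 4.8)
The plan is to reduce the three-way equivalence to a single nontrivial step, the classical Characterization Theorem of Karamata. The implication $(\ref{2}) \Rightarrow (\ref{1})$ is immediate, since $g(l) = l^\alpha$ is one admissible limit. For $(\ref{3}) \Rightarrow (\ref{2})$, if $h(l) = L(l)l^\alpha$ with $L$ slowly varying, then for each fixed $l > 0$
\begin{equation*}
\frac{h(tl)}{h(t)} = \frac{L(tl)}{L(t)}\,l^\alpha \longrightarrow l^\alpha, \qquad t \to \infty,
\end{equation*}
since $L(tl)/L(t) \to 1$ by slow variation. Conversely, for $(\ref{2}) \Rightarrow (\ref{3})$ I would set $L(l) := h(l)l^{-\alpha}$ and observe that $L(tl)/L(t) = l^{-\alpha}\,h(tl)/h(t) \to l^{-\alpha}\cdot l^\alpha = 1$, so $L$ is slowly varying and $h(l) = L(l)l^\alpha$ by construction.

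The heart of the lemma is therefore $(\ref{1}) \Rightarrow (\ref{2})$: the limit $g$ of a regularly varying $h$ must be a power. First I would derive the multiplicative relation $g(l_1 l_2) = g(l_1)g(l_2)$ for all $l_1, l_2 > 0$, by writing
\begin{equation*}
\frac{h(t l_1 l_2)}{h(t)} = \frac{h((t l_2)l_1)}{h(t l_2)}\cdot\frac{h(t l_2)}{h(t)}
\end{equation*}
and letting $t \to \infty$: the second factor converges to $g(l_2)$, while in the first the substitution $s = t l_2 \to \infty$ yields convergence to $g(l_1)$. Since $h$ is eventually positive, the ratios are eventually positive, whence $g \ge 0$; and the relation $g(l)g(1/l) = g(1) = 1$ forces $g(l) > 0$ everywhere. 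Next I would note that $g$ is measurable: because the defining limit exists for every $l$, it may be computed along the integers, so $g(l) = \lim_{n\to\infty} h(nl)/h(n)$ is a pointwise limit of the measurable maps $l \mapsto h(nl)/h(n)$, hence measurable.

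To conclude, I would linearize by passing to logarithms. Setting $\phi(u) := \log g(e^u)$, the positivity and measurability of $g$ make $\phi$ a measurable function, and the multiplicative relation becomes the additive Cauchy equation $\phi(u+v) = \phi(u) + \phi(v)$. Invoking the classical theorem that a measurable additive function on $\R$ is linear (see \cite[Section 1.1]{Bingham}), I obtain $\phi(u) = \alpha u$ for some $\alpha \in \R$, that is $g(l) = l^\alpha$, which completes $(\ref{1}) \Rightarrow (\ref{2})$. The main obstacle is precisely this last step: it is the only place where measurability of $h$ (as opposed to, say, continuity) is used in an essential way, and it rests on the nonelementary fact that a measurable additive function is automatically bounded on a set of positive measure and hence continuous. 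I would cite this result rather than reproduce the Steinhaus-type argument.
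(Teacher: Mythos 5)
Your proof is correct and complete. The paper itself offers no proof of this lemma, delegating it entirely to the citation \cite[Section 1.4]{Bingham}; your argument --- the routine implications (ii) $\Rightarrow$ (i), (iii) $\Leftrightarrow$ (ii), followed by (i) $\Rightarrow$ (ii) via the multiplicative Cauchy equation $g(l_1l_2)=g(l_1)g(l_2)$, the positivity forced by $g(l)g(1/l)=g(1)=1$, measurability of $g$ as a pointwise limit along integers, and the classical theorem that a measurable additive function on $\R$ is linear --- is precisely the standard Characterization Theorem proof from that reference, so you have in effect supplied the very argument the paper implicitly invokes.
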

The following proposition says that the condition (\ref{rv}) in Theorem \ref{isoasymptotics} is more general than the classical  assumption "$K(z)=k(|z|)$ with $k$ regularly varying with index $\beta\in(-d,0)$", discussed in Section \ref{secmotex}.

\begin{proposition}{\cite[Proposition 1.5.11]{Bingham}}\label{nonviceversa} If $k:\R_+\rightarrow\R$ is regularly varying with index $-\beta\in(-d,0)$ and locally bounded on $\R_+$, then 
	\[
	\frac{t^dk(t)}{\int_0^tk(r)r^{d-1}dr}\rightarrow(d-\beta)\quad \quad \text{as } t\rightarrow\infty
	\]
	and $w_t:=\omega_{d-1}\int_0^tk(r)r^{d-1}dr$ is regularly varying with index $\alpha=d-\beta$.
\end{proposition}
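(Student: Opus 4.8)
The plan is to reduce the statement to Karamata's theorem (its direct half), which is the standard tool for controlling integrals of regularly varying functions. The only real input is the correct bookkeeping of regular-variation indices.

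First I would observe that the integrand $f(r):=k(r)\,r^{d-1}$ is itself regularly varying: it is the product of $k$, regularly varying with index $-\beta$, and $r\mapsto r^{d-1}$, regularly varying with index $d-1$, so by Lemma \ref{lemma add meas lin} it is regularly varying with index $\rho:=d-1-\beta$. Since $-\beta\in(-d,0)$ forces $\beta\in(0,d)$, we get $\rho=d-1-\beta>-1$, equivalently $\rho+1=d-\beta>0$; this is exactly the range in which Karamata's theorem applies. I would also record that $\int_0^t f(r)\,dr$ is finite for every $t$: local boundedness of $k$ handles large $r$, and near the origin the factor $r^{d-1}$ is integrable (in our setting $k$ is moreover bounded, being the radial profile of the bounded covariance $K$), so the integral is well defined.

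Next I would invoke the direct half of Karamata's theorem: for $f$ regularly varying with index $\rho>-1$ and locally integrable,
\[
\frac{x f(x)}{\int_0^x f(r)\,dr}\longrightarrow \rho+1,\qquad x\to\infty.
\]
With $f(r)=k(r)r^{d-1}$ and $\rho+1=d-\beta$, the left-hand side is
\[
\frac{x\cdot k(x)x^{d-1}}{\int_0^x k(r)r^{d-1}\,dr}=\frac{x^d k(x)}{\int_0^x k(r)r^{d-1}\,dr},
\]
which is precisely the first claimed limit. For the second assertion I would feed this back in: the limit just obtained can be read as
\[
w_t=\omega_{d-1}\int_0^t k(r)r^{d-1}\,dr\sim\frac{\omega_{d-1}}{d-\beta}\,t^d k(t),\qquad t\to\infty .
\]
Now $t\mapsto t^d k(t)$ is regularly varying with index $d+(-\beta)=d-\beta$, again by Lemma \ref{lemma add meas lin}; and since a function asymptotically equivalent to a regularly varying function is itself regularly varying with the same index, $w_t$ is regularly varying with index $\alpha=d-\beta$.

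The hard part is really Karamata's theorem itself, which is why in practice the result is quoted verbatim as \cite[Proposition 1.5.11]{Bingham}; once that is available, everything reduces to identifying the indices and the routine check that the integral converges at the origin. The only point deserving a moment's care is verifying the strict inequality $\rho>-1$ (so that $xf(x)\to\infty$ and the constant $\int_0^{X}f$ is asymptotically negligible), but this is guaranteed by $\beta<d$.
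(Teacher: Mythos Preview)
Your argument is correct: the reduction to Karamata's direct half via $f(r)=k(r)r^{d-1}$, the index bookkeeping $\rho=d-1-\beta>-1$, and the asymptotic equivalence $w_t\sim \frac{\omega_{d-1}}{d-\beta}t^d k(t)$ are all sound. Note that the paper itself gives no proof of this proposition---it is stated as a direct citation of \cite[Proposition~1.5.11]{Bingham}, which is precisely Karamata's theorem in the form you invoke---so your write-up in fact supplies the standard argument that the paper merely references.
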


A fundamental tool for the proof of Theorem \ref{isoasymptotics} will be Potter's theorem (see e.g. \cite[Theorem 1.5.6]{Bingham}).
\begin{theorem}(Potter's Theorem)\label{potter} If $h:\R_+\rightarrow\R$ is regularly varying with index $\alpha\in\R$, then for every $A>1$, $\delta>0$, $\exists$ $X=X(h,A,\delta)>0$ such that
	$$|h(lt)/h(t)|\le A\max\{l^{\alpha+\delta},l^{\alpha-\delta}\} \quad\quad\forall l>\frac{X}{t}, \quad \forall t>X.$$
\end{theorem}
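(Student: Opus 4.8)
The plan is to reduce the statement to the case of slowly varying functions and then exploit the Karamata representation. By Lemma \ref{lemma add meas lin} I may write $h(t)=L(t)t^{\alpha}$ with $L$ slowly varying, and since
\[
\frac{h(lt)}{h(t)}=l^{\alpha}\,\frac{L(lt)}{L(t)},
\]
the claim follows once I establish the Potter bound for the index-$0$ function $L$: for every $A>1$ and $\delta>0$ there is $X>0$ with $|L(lt)/L(t)|\le A\max\{l^{\delta},l^{-\delta}\}$ whenever $t>X$ and $l>X/t$. Indeed, multiplying by $l^{\alpha}$ and distinguishing the cases $l\ge 1$ and $l<1$ yields the identity $l^{\alpha}\max\{l^{\delta},l^{-\delta}\}=\max\{l^{\alpha+\delta},l^{\alpha-\delta}\}$, which is exactly the stated right-hand side. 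Since $L$ is eventually positive I will also enlarge $X$ so that $L(s)>0$ for $s\ge X$, which disposes of the absolute value.

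The key tool is Karamata's representation theorem for slowly varying functions: there exist $a>0$, a measurable $c:[a,\infty)\to(0,\infty)$ with $c(s)\to c_0\in(0,\infty)$, and a function $\eta:[a,\infty)\to\R$ with $\eta(u)\to 0$ as $u\to\infty$, such that
\[
L(t)=c(t)\exp\left(\int_a^t\frac{\eta(u)}{u}\,du\right),\qquad t\ge a.
\]
Consequently, for $t,lt\ge a$,
\[
\frac{L(lt)}{L(t)}=\frac{c(lt)}{c(t)}\,\exp\left(\int_t^{lt}\frac{\eta(u)}{u}\,du\right),
\]
and I would estimate the two factors separately. For the prefactor, because $c(s)\to c_0>0$ I can fix $\kappa>0$ small enough that $(c_0+\kappa)/(c_0-\kappa)\le A$ and then choose $X_1\ge a$ with $c_0-\kappa<c(s)<c_0+\kappa$ for all $s\ge X_1$; hence $c(lt)/c(t)\le A$ whenever $t\ge X_1$ and $lt\ge X_1$.

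For the exponential factor, the substitution $u=tv$ gives $\int_t^{lt}\eta(u)/u\,du=\int_1^l\eta(tv)/v\,dv$. Fixing $X_0\ge a$ with $|\eta(u)|<\delta$ for $u\ge X_0$, whenever $t\ge X_0$ and $l>X_0/t$ one has $tv\ge X_0$ for every $v$ between $1$ and $l$, so
\[
\left|\int_1^l\frac{\eta(tv)}{v}\,dv\right|\le\delta\left|\int_1^l\frac{dv}{v}\right|=\delta\,|\log l|,
\]
whence the exponential lies in $[l^{-\delta},l^{\delta}]$ and is bounded by $\max\{l^{\delta},l^{-\delta}\}$. Taking $X=\max\{X_0,X_1\}$ (further enlarged to ensure positivity of $L$ on $[X,\infty)$), both factors are controlled for $t>X$ and $l>X/t$, giving $L(lt)/L(t)\le A\max\{l^{\delta},l^{-\delta}\}$ and closing the reduction.

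The substantive content here is Karamata's representation theorem (equivalently, the uniform convergence theorem for slowly varying functions), which supplies the multiplicative decomposition into a convergent factor and the exponential of an integral with vanishing density; once this structural fact is granted, everything that remains is the elementary estimate above. The only genuine subtlety I anticipate is keeping the bounds \emph{uniform in $l$} across the full range $l>X/t$ rather than for each fixed $l$: it is precisely the rescaling $u=tv$ that converts the behaviour of $\eta$ near infinity into the clean $|\log l|$ bound, and this uniformity is what makes the resulting estimate strong enough to serve as a dominating-function tool in the proof of Theorem \ref{isoasymptotics}.
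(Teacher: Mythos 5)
Your proof is correct: the paper does not prove Potter's Theorem but quotes it from \cite[Theorem 1.5.6]{Bingham}, and your argument --- reducing to the slowly varying factor via $h(t)=L(t)t^{\alpha}$, then applying Karamata's representation theorem, absorbing the prefactor $c(lt)/c(t)$ into $A$ and bounding the exponential factor by $\max\{l^{\delta},l^{-\delta}\}$ once $t>X$ and $lt>X$ --- is precisely the standard proof given in that reference. You also handle the one delicate point correctly, namely that the condition $l>X/t$ (equivalently $lt>X$) guarantees $tv\ge X_0$ for all $v$ between $1$ and $l$ even when $l<1$, which is what makes the bound uniform over the full range of $l$ and hence usable as a dominating estimate in Corollary \ref{coregvar}.
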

\begin{remark}{\textbf{(Justifying equation \eqref{K rrv ascov})}}\label{just}
	Note that if $k:\R_+\rightarrow \R$ is measurable, bounded and regularly varying with index $-\beta\in(-d,0)$, by Theorem \ref{potter} we have for $D,L\subseteq \R^d$ compact sets 
	\[
	\mathbf{1}_{\{|x-y|\ge X/t\}}\mathbf{1}_{D}(x)\mathbf{1}_{L}(y)\frac{k(t|x-y|)}{k(t)}\le c \,\mathbf{1}_{D}(x)\mathbf{1}_{L}(y)\max\left\{|x-y|^{-\beta-\delta}, |x-y|^{-\beta+\delta} \right\}
	\]
	where $X,c>0$ are suitable constants. Moreover, since $\beta\in(0,d)$, choosing $\delta>0$ small enough, the function on the RHS is integrable in $\R^{2d}$. Therefore, by dominated convergence theorem, we have as $t\rightarrow \infty$
	\[
	\int_D \int_L \mathbf{1}_{\{|x-y|\ge X/t\}}\frac{k(t|x-y|)}{k(t)}dxdy\rightarrow \int_D \int_L |x-y|^{-\beta}dxdy  .
	\]
	This last fact, together with
	\[
	\int_D \int_L \mathbf{1}_{\{|x-y|\le X/t\}}\frac{k(t|x-y|)}{k(t)}dxdy \le c \, \frac{1}{k(t)t^d}=\frac{1}{\ell(t)t^{d-\beta}} \rightarrow 0, \quad \quad \text{as $t\rightarrow\infty$,}
	\]
	concludes the proof of \eqref{K rrv ascov}.
\end{remark}
\begin{corollary}
	\label{coregvar}
	Consider $K:\R^d\rightarrow\R$ measurable, bounded function and $w:\R_+\rightarrow\R$ as defined in (\ref{w}). Assume that $w$ is regularly varying with index $\alpha>-1$. Then for every $U\in(0,\infty)$ 
	\[
	\int_0^U |w_{lt}/w_t|dl\rightarrow \int_0^U l^{\alpha} dl \quad \quad \text{as }t\rightarrow\infty.
	\]
\end{corollary}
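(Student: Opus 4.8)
The plan is to prove this by a dominated convergence argument. By Lemma~\ref{lemma add meas lin}, the regular variation of $w$ with index $\alpha$ gives the pointwise limit $w_{lt}/w_t \to l^{\alpha}$ for every fixed $l>0$ as $t\to\infty$; since regularly varying functions are eventually positive (see Definition~\ref{regularly varying}) and $l^{\alpha}>0$, this also yields $|w_{lt}/w_t| \to l^{\alpha}$. It thus remains to produce a $t$-uniform integrable majorant on $(0,H)$ so as to pass to the limit inside the integral.

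The natural tool for the majorant is Potter's Theorem~\ref{potter}. I would fix $A>1$ and choose $\delta>0$ small enough that $\alpha-\delta>-1$ (possible precisely because $\alpha>-1$). Potter's Theorem then provides $X>0$ such that, for $t>X$ and $l>X/t$,
\[
|w_{lt}/w_t| \le A\max\{l^{\alpha+\delta},\, l^{\alpha-\delta}\}.
\]
The function $l\mapsto A\max\{l^{\alpha+\delta},\, l^{\alpha-\delta}\}$ is integrable on $(0,H)$: near the origin the smaller exponent $l^{\alpha-\delta}$ dominates and is integrable since $\alpha-\delta>-1$, while on any set bounded away from $0$ both powers are bounded. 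This bound, however, only controls the integrand on $(X/t,H)$; the region of small $l$ must be handled separately.

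For the remaining region $l\in(0,X/t]$, I would use the trivial estimate coming from the boundedness of $K$: writing $C=\|K\|_\infty\,{\rm Vol}(\{|z|\le 1\})$, one has $|w_s|\le C s^d$ for all $s>0$, hence
\[
\int_0^{X/t}\bigl|w_{lt}/w_t\bigr|\,dl \le \frac{C t^d}{|w_t|}\int_0^{X/t} l^d\,dl = \frac{C X^{d+1}}{d+1}\cdot\frac{1}{t\,|w_t|}.
\]
Since $t\mapsto t\,|w_t|$ is regularly varying with index $\alpha+1>0$, it tends to $+\infty$, so this contribution vanishes as $t\to\infty$. Combining the two regions, I would apply dominated convergence to $\mathbf{1}_{(X/t,H)}(l)\,|w_{lt}/w_t|$ with majorant $A\max\{l^{\alpha+\delta},\, l^{\alpha-\delta}\}$ (noting that $X/t\to 0$, so the indicator converges to $\mathbf{1}_{(0,H)}$) to conclude that $\int_{X/t}^H |w_{lt}/w_t|\,dl \to \int_0^H l^{\alpha}\,dl$, while the small-$l$ part is negligible.

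The main obstacle is the lower endpoint: Potter's Theorem gives no control when $lt$ is small, and one cannot simply extend the power-law majorant down to $l=0$ uniformly in $t$. The key observations resolving this are that the volume bound $|w_s|\le C s^d$ makes the small-$l$ contribution of order $1/(t\,|w_t|)$, and that $\alpha>-1$ forces $t\,|w_t|\to\infty$; the same restriction $\alpha>-1$ is exactly what guarantees integrability of the Potter majorant at the origin.
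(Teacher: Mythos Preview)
Your proof is correct and follows essentially the same approach as the paper: split the integral at $X/t$, use Potter's bound (with $\delta$ chosen so that $\alpha-\delta>-1$) and dominated convergence on $(X/t,H)$, and show the contribution from $(0,X/t)$ is $O(1/(t|w_t|))\to 0$ via the crude volume bound on $|w_{lt}|$. The only cosmetic difference is that the paper fixes the specific choices $A=2$, $\delta=(\alpha+1)/2$ and bounds $|w_{lt}|$ by a constant on the small-$l$ region (since $lt<X$ there), whereas you keep $A,\delta$ generic and use the slightly sharper bound $|w_{lt}|\le C(lt)^d$; both lead to the same conclusion.
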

\begin{proof}
	First of all, observe that by Potter's Theorem \ref{potter}, choosing $A=2$ and $\delta=\frac{\alpha+1}{2}>0$, $\exists$ $X=X(w,A,\delta)=X(K,\alpha)>0$ such that 
	\begin{equation}
		\label{BOO}
		|w_{lt}/w_t|\le 2\max\left\{l^{\frac{3\alpha+1}{2}},l^{\frac{\alpha-1}{2}}\right\} \quad\quad\forall l>\frac{X}{t}, \quad \forall t>X,
	\end{equation}
	with $l\mapsto\max\{l^{\frac{3\alpha+1}{2}},l^{\frac{\alpha-1}{2}}\}$ integrable on $[0,U]$ because $\alpha>-1$.
	Moreover, since $w$ is of the form (\ref{w}), for $0<l<X/t$ we have
	\[
	|w_{lt}/w_t|\le \frac{\max _{x\in\R^d}|K(x)|}{|w_t|}\omega_{d-1}X^{d} \quad \forall\, 0<l<X/t.
	\]
	Putting all together, we obtain 
	\[
	\int_0^U |w_{lt}/w_t|dl=\int_0^{X/t} |w_{lt}/w_t|dl+\int_{X/t}^U |w_{lt}/w_t|dl.
	\]
	Note that $w_t=\ell(t)t^{\alpha}$ with $\alpha>-1$, so $\ell(t)t^{1+\alpha}\rightarrow \infty$  and
	\[
	\int_0^{X/t} |w_{lt}/w_t|dl\le\frac{\max _{x\in\R^d}|K(x)|}{t|w_t|}\omega_{d-1}X^{d+1}=\frac{\max _{x\in\R^d}|K(x)|}{|\ell(t)t^{1+\alpha}|}\omega_{d-1}X^{d+1}\rightarrow 0 .
	\]
	Moreover, by (\ref{BOO}) and dominated convergence theorem we have
	\[
	\int_{X/t}^U |w_{lt}/w_t|dl\rightarrow\int_0^U l^{\alpha} dl,
	\]
	which concludes the proof.
\end{proof}

\section{Proof of Theorem \ref{isoasymptotics}}\label{exact asymptotics}
\begin{proof}(of Theorem \ref{isoasymptotics})
	By Proposition \ref{wellposedness} and the change of variable $x=x$, $y=x-z$, we have
	\begin{eqnarray}
		&&{\rm Cov}\left(\frac{\int_{tD}A_x dx}{t^{d/2}w_t^{1/2}}, \frac{\int_{tL}A_y dy}{t^{d/2}w_t^{1/2}}\right)\label{covcov}\\ &=&\int_{tD}\int_{tL}K(x-y)\frac{dxdy}{t^dw_t} \nonumber\\
		&=&\int_{\R^d}K(z)g_{tD,tL}(z)\frac{dz}{t^dw_t}\nonumber\\ &=&\int_{\{|z|\le {\rm diam}(D\cup L)t\}}K(z)g_{D,L}\left(\frac z t\right)\frac{dz}{w_t} \nonumber
	\end{eqnarray}
	where $g_{D,L}$ is defined in (\ref{g}), ${\rm diam}(D\cup L)$ is defined in (\ref{h}) and the last equality follows from (iii) of Proposition \ref{prop crosscov} and 
	\[
	g_{tD,tL}(z)={\rm Vol}(tD\cap (tL+z))=t^d{\rm Vol}\left(D\cap \left(L+\frac z t\right)\right)=t^dg_{D,L}\left(\frac z t\right).
	\]
	Passing to polar coordinates (if $d=1$, recall the notation (\ref{not1})), we have
	\begin{eqnarray*}
		&&\int_{\{|z|\le {\rm diam}(D\cup L)t\}}K(z)g_{D,L}\left(\frac z t\right)dz\\
		&&=\int_{S^{d-1}}d\theta \int_0^{{\rm diam}(D\cup L)t} dr\,  r^{d-1}K(r\theta)g_{D,L}\left(\frac{r}{t}\theta\right),
	\end{eqnarray*}
	and since $D,L\in\mathcal{D}$ have finite perimeter in both {\it \textbf{Case 1.}} (\ref{case1}) and {\it \textbf{Case 2.}} (\ref{case2}), by (v) in Proposition \ref{prop crosscov} the covariance (\ref{covcov}) becomes
	\begin{eqnarray*}
		&&{\rm Cov}\left(\frac{\int_{tD}A_x dx}{t^{d/2}w_t^{1/2}}, \frac{\int_{tL}A_y dy}{t^{d/2}w_t^{1/2}}\right)\\
		&=&w_t^{-1}\int_{S^{d-1}}d\theta\int_0^{{\rm diam}(D\cup L)} dl \left(-\frac{d}{dl}(g_{D,L}(l\theta))\right)\left(\int_0^{tl}  r^{d-1}K(r\theta)dr\right).
	\end{eqnarray*}
	\vspace{0.5mm}
	
	Now we distinguish the two cases in the statement of Theorem \ref{isoasymptotics}.\\ \\
	{\it \textbf{Case 1.}}
	If $K(z)=k(|z|)$ for some $k:\R_+\rightarrow\R$, then $K(r\theta)=k(r)$ and we have (recall the definition (\ref{w}) of $w$)
	\begin{eqnarray*}
		&&{\rm Cov}\left(\frac{\int_{tD}A_x dx}{t^{d/2}w_t^{1/2}}, \frac{\int_{tL}A_y dy}{t^{d/2}w_t^{1/2}}\right)\\
		&=&\frac{1}{\omega_{d-1}}\int_{S^{d-1}}d\theta\int_0^{{\rm diam}(D\cup L)} dl \left(-\frac{d}{dl}(g_{D,L}(l\theta))\right)\frac{w_{lt}}{w_t}
	\end{eqnarray*}\\ 
	{\it \textbf{Case 2.}}
	If $D,L$ are balls centered in the same point $x_0\in\R^d$, then $g_{D,L}(r\theta)=g_{D,L}(r\theta')$ for every $\theta,\theta'\in S^{d-1}$ and by a Fubini-Tonelli argument (and (v) in Proposition \ref{prop crosscov}) we have, for all $\theta'\in S^{d-1}$ 
	\begin{eqnarray}\label{recurrent}
		&&{\rm Cov}\left(\frac{\int_{tD}A_x dx}{t^{d/2}w_t^{1/2}}, \frac{\int_{tL}A_y dy}{t^{d/2}w_t^{1/2}}\right)\\&=&\int_0^{{\rm diam}(D\cup L)} \left(-\frac{d}{dl}(g_{D,L}(l\theta'))\right)\frac{w_{lt}}{w_t} dl\nonumber\\
		&=&\frac{1}{\omega_{d-1}}\int_{S^{d-1}}d\theta\int_0^{{\rm diam}(D\cup L)}dl\left(-\frac{d}{dl}(g_{D,L}(l\theta))\right)\frac{w_{lt}}{w_t} \nonumber
	\end{eqnarray}which is equal to what we obtained in Case 1.\\ 
	
	Since the expression for the covariance is the same in both cases, to prove (\ref{covgen}) (and conclude the proof of Theorem \ref{isoasymptotics}) we only need to show 
	\begin{eqnarray}
		&&\int_{S^{d-1}}d\theta\int_0^{{\rm diam}(D\cup L)}dl\left(-\frac{d}{dl}(g_{D,L}(l\theta))\right)\frac{w_{lt}}{w_t}  \nonumber\\ &&\rightarrow\int_{S^{d-1}}d\theta\int_0^{{\rm diam}(D\cup L)}dl\left(-\frac{d}{dl}(g_{D,L}(l\theta))\right)l^{\alpha}  \quad \text{ as }t\rightarrow\infty \label{finalissima},
	\end{eqnarray}
	where $\alpha$ is the index of regular variation of $w_t$. First of all, note that $w_{lt}/w_t\rightarrow l^{\alpha}$ for every $l>0$, because $w$ is regularly varying with index $\alpha$, implying the point-wise convergences (for almost every $l>0,\theta \in S^{d-1}$)
	\[
	F_t(l,\theta):=\left(-\frac{d}{dl}(g_{D,L}(l\theta))\right)\frac{w_{lt}}{w_t}\rightarrow F(l,\theta):=\left(-\frac{d}{dl}(g_{D,L}(l\theta))\right)l^{\alpha},\quad \quad \text{as $t\rightarrow\infty$}
	\]
	and
	\[
	M_t(l):={\rm Per}(D)\wedge {\rm Per}(L)\,|w_{lt}/w_t|\rightarrow M(l):={\rm Per}(D)\wedge {\rm Per}(L)\,l^{\alpha},\quad \quad \text{as $t\rightarrow\infty$}.
	\]
	Moreover, by Proposition \ref{prop crosscov} the inequality $$|F_t(l,\theta)|=\left|\left(-\frac{d}{dl}(g_{D,L}(l\theta))\right)\frac{w_{tl}}{w_t}\right|\le M_t(l):={\rm Per}(D)\wedge {\rm Per}(L)\,|w_{lt}/w_t|$$ holds for almost every $l>0,\theta \in S^{d-1}$, and by Corollary \ref{coregvar} we have 
	\[
	\int_{S^{d-1}}d\theta\int_{0}^{{\rm diam}(D\cup L)}dl\,M_t(l)\rightarrow \int_{S^{d-1}}d\theta\int_{0}^{{\rm diam}(D\cup L)}dlM(l).
	\]
	Putting all together, by the generalized dominated convergence theorem we obtain
	\[
	\int_{S^{d-1}}d\theta\int_{0}^{{\rm diam}(D\cup L)}dl\,F_t(l, \theta)dl\rightarrow \int_{S^{d-1}}d\theta\int_{0}^{{\rm diam}(D\cup L)}dl\, F(l,\theta),\quad \quad \text{as $t\rightarrow\infty$},
	\]
	which is exactly \eqref{finalissima}.
\end{proof}

\section{Non-linear functionals of stationary Gaussian fields}\label{nnfGF}
In this section, we show how Theorem \ref{isoasymptotics} can be applied in the setting of non-linear functionals of stationary Gaussian fields to obtain multi-dimensional limit theorems. Through several examples, we will also compare the results obtained to the ones found in the existing literature.\\

Let $B=(B_x)_{x\in\R^d}$ be a continuous, centered, stationary Gaussian field with $B_x\sim N(0,1)$ $\forall$ $x\in\R^d$ and (continuous) covariance function 
\begin{equation}
	\label{Cdef}
	C:\R^d\rightarrow\R, \quad \quad{\rm Cov}(B_x,B_y)=C(x-y),\quad|C(z)|\le C(0)=1.
\end{equation}
Denote again by $\mathcal{D}$ a collection of compact sets in $\R^d$ and consider the non-linear functional\footnote{(\ref{set indexed phi}) is always well-posed in the $L^2$-sense, see e.g.  \cite[Proposition 3]{MN}.} of $B$ 
\begin{equation}
	\label{set indexed phi}
	\int_{tD}\varphi(B_x)dx,\quad \quad D\in\mathcal{D},\quad t>0,
\end{equation}
where $tD:=\{tx: x\in D\}$, $\varphi:\R\rightarrow\R$ is not constant (to avoid trivialities), $\varphi\in L^2(\R,\gamma(dx))$, $\int_{\R}\varphi(x)\gamma(dx)=0$ and $\gamma$ is the standard Gaussian measure on $\R$. Then, we consider the $L^2$-decomposition of $\varphi$  (see e.g. \cite[Section 1.4]{bluebook})
\[
\varphi=\sum_{q=1}^\infty a_qH_q,
\]
where $H_q$ is the $q$-th Hermite polynomial and
\begin{equation}
	\label{normphi}
	\|{\varphi}\|^2_{L^2(\R,\gamma(dx))}=\sum_{q=1}^\infty q!a_q^2<\infty.
\end{equation}
Therefore, $A_x:=\varphi(B_x)$ can be expressed as 
\[
A_x:=\varphi(B_x)=\sum_{q=R}^\infty a_qH_q(B_x),\quad \quad a_R\neq0,
\]
where $R=\inf\{q\ge1 :a_q\neq0\}<\infty$ denotes the \textbf{Hermite rank} of $\varphi$ and the equality holds in $L^2(\Omega)$ sense. Moreover, we have the isometry property (see e.g. \cite[Section 1.4]{bluebook})
\[
\E[H_q(B_x)H_r(B_y)]=q!C^q(x-y) \delta_{qr}
\]
where $\delta_{qr}$ is the Kronecker delta.
As a consequence, the covariance function $K(x-y)={\rm Cov}(A_x,A_y)$ of  $A=(A_x)_{x\in\R^d}$ is
\begin{equation}
	\label{exp K}
	K(z)=\sum_{q=R}^\infty q!a_q^2C^q(z).
\end{equation}
Note that $A$ is obviously weakly stationary and $K$ is continuous, because uniform limit of continuous functions (thanks to (\ref{normphi}) and (\ref{Cdef})). Moreover, $w$ in (\ref{w}) in this case is
\begin{equation}
	\label{expansion w}
	w_t=\int_{\{|z|\le t\}}K(z)dz=\sum_{q=R}^\infty a_q^2w_{q,t}, \quad \quad t>0,
\end{equation}
where we additionally introduced the notation 
\begin{equation*}
	\label{Wq}
	w_{q,t}=q!\int_{\{|z|\le t\}}C^q(z)dz.
\end{equation*}
Now we can finally understand what new results we obtain in this setting applying Theorem \ref{isoasymptotics}, dividing the study in different cases.
\subsection{The Breuer-Major case.}\label{sectionBM}
If $C\in L^R(\R^d)$, we are in the Breuer-Major case. Since $C\in L^R(\R^d)$ implies $K\in L^1(\mathbb{R}^d)$ (see (\ref{Cdef}) and (\ref{exp K})), reasoning as in (\ref{Kintegrable ascov}) we get
\begin{equation}
	\label{bmcov}
	\lim_{t\rightarrow\infty}{\rm Cov}\left(\frac{\int_{tD}\varphi(B_x) dx}{t^{d/2}}, \frac{\int_{tL}\varphi(B_y) dy}{t^{d/2}}\right)= {\rm Vol}(D\cap L)\,\int_{\R^d}K(z)dz.
\end{equation}
Note that if $D$ is fixed and $u,v\in[0,1]$, we have
\[
\lim_{t\rightarrow\infty}{\rm Cov}\left(\frac{\int_{tD u^{1/d}}\varphi(B_x) dx}{t^{d/2}}, \frac{\int_{tD v^{1/d}}\varphi(B_y) dy}{t^{d/2}}\right)= {\rm Vol}(D)\,\min\{u,v\}\int_{\R^d}K(z)dz,
\]
which is, up to a scaling factor, the covariance function of a Brownian motion (see also \eqref{bmmultispecialcase}).
Moreover, we have the following fundamental result, proved for the first time in the discrete setting by Breuer and Major in their seminal paper \cite{BM}, and extended to different settings by several authors, see e.g. \cite{Ben Hariz}, \cite{CNN}, \cite{IL}, \cite{LO}, \cite{NP}, \cite{NPP}, \cite{NZ}.

\begin{theorem}(Breuer-Major)\label{thm:BM}
	If $C\in L^R(\R^d)$, then 
	\begin{equation}
		\label{bmeq}
		t^{-d/2}\int_{tD}\varphi(B_x)dx \overset{\rm law}{\to} N(0,\sigma^2{\rm Vol}(D))
	\end{equation}
	where 
	\begin{equation*}\label{sigma}
		\sigma^2=\int_{\R^d}K(z)dz=\sum_{q=R}^\infty q!a_q^2\int_{\R^d}C(z)^qdz\geq 0.
	\end{equation*}
\end{theorem}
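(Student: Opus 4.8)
The plan is to combine the Wiener chaos expansion with the fourth moment theorem and a truncation argument. Representing $B$ through the isonormal Gaussian process over a Hilbert space $\mathcal{H}$ with $\langle e_x,e_y\rangle_{\mathcal{H}}=C(x-y)$, one has $H_q(B_x)=I_q(e_x^{\otimes q})$ for the $q$-th multiple Wiener--It\^o integral $I_q$, so that
\[
F_t(D):=t^{-d/2}\int_{tD}\varphi(B_x)\,dx=\sum_{q=R}^\infty I_q(f_{q,t}),\qquad f_{q,t}=a_q\,t^{-d/2}\int_{tD}e_x^{\otimes q}\,dx\in\mathcal{H}^{\odot q},
\]
with summands living in pairwise orthogonal chaoses. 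First I would record that $\mathrm{Var}\big(I_q(f_{q,t})\big)=q!\,a_q^2\,t^{-d}\int_{tD}\int_{tD}C^q(x-y)\,dx\,dy$, and that the same covariogram computation leading to (\ref{covcov})--(\ref{bmcov}) gives $\mathrm{Var}\big(I_q(f_{q,t})\big)\to\sigma_q^2:=q!\,a_q^2\,\mathrm{Vol}(D)\int_{\R^d}C^q(z)\,dz$ as $t\to\infty$; the integral is finite because $|C|\le1$ and $C\in L^R$ force $C^q\in L^1(\R^d)$ for every $q\ge R$. By (\ref{bmcov}) with $D=L$ we get $\sum_{q\ge R}\sigma_q^2=\sigma^2\,\mathrm{Vol}(D)$.

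Next comes the truncation. Using $g_{tD}(z)\le t^d\,\mathrm{Vol}(D)$ together with $|C|^q\le|C|^R$ for $q\ge R$, one obtains the uniform bound $\mathrm{Var}\big(I_q(f_{q,t})\big)\le q!\,a_q^2\,\mathrm{Vol}(D)\,\norm{C}_{L^R(\R^d)}^R$, whose sum over $q$ converges by (\ref{normphi}). Hence the tail $\sum_{q>N}I_q(f_{q,t})$ is small in $L^2(\Omega)$ uniformly in $t$ for $N$ large, and by orthogonality of the chaoses it suffices to prove a joint central limit theorem for the finite vector $\big(I_q(f_{q,t})\big)_{R\le q\le N}$ and then let $N\to\infty$.

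The heart of the argument is the CLT for each fixed chaos. For $q=1$ (when $R=1$) the random variable $I_1(f_{1,t})$ is already Gaussian. For $q\ge2$ I would invoke the fourth moment theorem: since $\mathrm{Var}\big(I_q(f_{q,t})\big)\to\sigma_q^2$, it is enough to show that each contraction norm $\norm{f_{q,t}\otimes_r f_{q,t}}$ tends to $0$ for $1\le r\le q-1$, after which joint convergence of the distinct chaoses to independent Gaussians follows from the Peccati--Tudor theorem (the off-diagonal covariances vanishing by orthogonality); see \cite{bluebook}. Expanding the contraction norm produces, up to the factor $a_q^4\,t^{-2d}$, a four-fold integral of $C(x_1-x_2)^r C(x_3-x_4)^r C(x_1-x_3)^{q-r}C(x_2-x_4)^{q-r}$ over $(tD)^4$; integrating out one free variable (a factor $O(t^d)$) and substituting $u=x_1-x_2$, $v=x_3-x_4$, $w=x_1-x_3$ bounds it by $t^{-d}$ times the integral $\int_{(\R^d)^3}|C(u)|^r|C(v)|^r|C(w)|^{q-r}|C(w-u+v)|^{q-r}\,du\,dv\,dw$.

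The main obstacle is precisely the finiteness, uniformly in $t$, of this last integral, which is delicate for the lowest chaos $q=R$: there both exponents $r$ and $q-r$ can be strictly below $R$, so neither $|C|^r$ nor $|C|^{q-r}$ need lie in $L^1(\R^d)$. I would handle this by integrating out $w$ first, which yields the convolution $\big(|C|^{q-r}\ast|C|^{q-r}\big)(u-v)$, and then estimate $\int|C(u)|^r|C(v)|^r\big(|C|^{q-r}\ast|C|^{q-r}\big)(u-v)\,du\,dv$ by a combination of Young's and H\"older's inequalities. The key is that $|C|\le1$ gives complete freedom in the exponents: every power $|C|^p$ with $p\ge R$ is dominated by $|C|^R\in L^1$, while every power with $p\le R$ lies in $L^{R/p}$, and these memberships can be matched against the exponent constraints forced by $C\in L^R$. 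Once this integral is shown to be finite, the prefactor $t^{-d}\to0$ yields $\norm{f_{q,t}\otimes_r f_{q,t}}\to0$ and hence the fixed-chaos CLT; assembling the finite joint CLT over $R\le q\le N$ and letting $N\to\infty$ through the uniform tail bound finally gives $F_t(D)\overset{\rm law}{\to}N\big(0,\sigma^2\,\mathrm{Vol}(D)\big)$.
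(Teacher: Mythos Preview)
The paper does not prove Theorem~\ref{thm:BM}; it is quoted as a classical result with references to \cite{BM}, \cite{CNN}, \cite{NPP} and others. So there is no in-paper argument to compare against. Your outline --- chaos expansion, uniform tail truncation via (\ref{normphi}), and the fourth moment/Peccati--Tudor theorem for each fixed chaos --- is exactly the standard modern route (see \cite{bluebook}, Chapter~7), and the overall architecture is correct.

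There is, however, a genuine gap in your contraction estimate. You bound $\|f_{q,t}\otimes_r f_{q,t}\|^2$ by $t^{-d}$ times
\[
I:=\int_{(\R^d)^3}|C(u)|^r|C(v)|^r|C(w)|^{q-r}|C(w-u+v)|^{q-r}\,du\,dv\,dw,
\]
and then claim $I<\infty$ via Young/H\"older. For the critical chaos $q=R$ this is false in general. Take $q=R=2$, $r=1$: your own substitution gives $I=\big\|\,|C|\!*\!|C|\,\big\|_{L^2(\R^d)}^2$, and from $C\in L^2$ Young only yields $|C|\!*\!|C|\in L^\infty$, not $L^2$. Concretely, in $d=1$, any covariance with $|C(z)|\sim c|z|^{-\alpha}$ for some $\alpha\in(\tfrac12,\tfrac34)$ satisfies $C\in L^2$ but $(|C|\!*\!|C|)(s)\sim c'|s|^{1-2\alpha}\notin L^2$, so $I=\infty$. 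No H\"older/Young bookkeeping can close this: the Brascamp--Lieb exponent condition for the four rank-$d$ linear forms on $\R^{3d}$ requires $\sum_j 1/p_j=3$, whereas the integrability you have available from $C\in L^R$ forces $\sum_j 1/p_j\le 2q/R=2$ when $q=R$.

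The correct argument does not extend the domain to $(\R^d)^3$. One keeps the integration over $(tD)^4$ and uses an approximation: split $|C|=|C|\mathbf 1_{\{|\cdot|\le N\}}+|C|\mathbf 1_{\{|\cdot|>N\}}$, where the compactly supported piece contributes a term that is $O_N(1)$ before normalisation (hence $O(t^{-d})$ after), while each term carrying at least one tail factor is controlled, via the same convolution structure and Cauchy--Schwarz/Young, by a constant times $\big(\int_{\{|z|>N\}}|C|^R\big)^{\theta}$ for some $\theta>0$. Letting $t\to\infty$ and then $N\to\infty$ gives $\|f_{q,t}\otimes_r f_{q,t}\|\to0$ without ever asserting $I<\infty$. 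This is how the estimate is carried out in the references, and once you replace your finiteness claim by this truncation-plus-tail argument the rest of your proof goes through.
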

Note that also multi-dimensional (see e.g. \cite[Theorem 1.2]{NZ}) and stronger type (see e.g. \cite[Theorem 1.1]{CNN}) central limit theorems of the form
\begin{equation}\label{bmmulti}
	\left(t^{-d/2}\int_{tD}\varphi(B_x)dx\right)_{D \in\mathcal{D}'} \overset{\rm f.d.d.}{\to} \sigma G=(\sigma G(D))_{D\in\mathcal{D}'}
\end{equation}
have been proved in the literature, where $G$ is a Gaussian noise, with $${\rm Cov}(G(D),G(L))= {\rm Vol}(D\cap L),$$ and $\mathcal{D}'$ is a suitable class of compact sets. In particular, considering $\mathcal{D}'=\{D u^{1/d}, u\in[0,1]\}$ with fixed $D=\{x\in\R^d : |x|\le 1\}$, we have (see \cite[Theorem 2.3.1]{IL})
\begin{equation}\label{bmmultispecialcase}
	\left(t^{-d/2}\int_{tD{u^{1/d}}}\varphi(B_x)dx\right)_{u\in[0,1]} \overset{\rm f.d.d.}{\to} \sigma {\rm Vol}(D) W=\left(\sigma {\rm Vol}(D) W_u\right)_{u\in[0,1]},
\end{equation}
where $W$ is a standard Brownian motion. 

If $\sigma^2>0$, then (\ref{rv}) holds with $\alpha=0$ and Theorem \ref{isoasymptotics} implies
\begin{equation}\label{nonbmcov}
	\lim_{t\rightarrow\infty}{\rm Cov}\left(\frac{\int_{tD}\varphi(B_x) dx}{t^{d/2}w_t^{1/2}}, \frac{\int_{tL}\varphi(B_y) dy}{t^{d/2}w_t^{1/2}}\right)= {\rm Vol}(D\cap L),
\end{equation}
which is (\ref{bmcov}). Moreover, Theorem \ref{thm:BM} (and its multi-dimensional generalizations) ensure Gaussian fluctuations, which are not guaranteed by Theorem \ref{isoasymptotics}. Nevertheless, our Theorem \ref{isoasymptotics} allows to obtain the asymptotic covariances (\ref{nonbmcov}) in many other situations (see e.g. Example \ref{Berry4} and \ref{Berry3}), whenever (\ref{rv}) holds with $\alpha=0$, i.e. $w_t$ is slowly varying.

Conversely, if $\sigma^2=0$, (\ref{bmcov}) and (\ref{bmeq}) only imply that  $t^{d/2}$ is not the correct normalization if we hope in a non-degenerate limit in distribution (see also the discussion in Section \ref{secmotex}). However, $w_t$ could be regularly varying, allowing us to apply Theorem \ref{isoasymptotics}. This is exactly what happens in the following example.

\begin{example}{\textbf{(Breuer-Major case, $\mathbf{\sigma^2=0}$.)}} \label{ex:bm0}
	Fix  $d=1$, consider a fractional Brownian motion $(W_x^H)_{x\in\R}$ with Hurst index $H\in(0,1/2)$ and recall its covariance structure
	\[
	{\rm Cov}(W_x^H,W_y^H)=\frac{1}{2}\left(|x|^{2H}+|y|^{2H}-|x-y|^{2H}\right).
	\]
	Furthermore, consider the associated continuous, stationary Gaussian process $B=(B_x)_{x\in\R^d}$, defined by
	\[
	B_x:=W^H_{x+1}-W^H_x .
	\]
	Note that we consider the $1$-increment process $B$ instead of $W^H$ because $B$ is stationary (hence weakly stationary, and we can apply Theorem \ref{isoasymptotics}), while $W^H$ is not even weakly stationary (it only has stationary increments).
	A standard computation yields that the covariance function $C:\R\rightarrow\R$ of $B$ is given by
	\[
	C(z)=\frac 12\left(|1+z|^{2H}+|1-z|^{2H}-2|z|^{2H}\right)
	\]
	and $C\in L^1(\R^d)$, because as $|z|\rightarrow\infty$ we have (by Taylor expansion)
	\[
	C(z)\sim 2H(2H-1)|z|^{2H-2}, \quad \quad 2H-2\in(-2,-1).
	\]
	If we choose $\varphi(x)=x$, then $\int_{tD}\varphi(B_x)dx=\int_{tD}B_x dx$ is Gaussian for every $t>0$. The important fact to note in this example is that 
	\[
	\sigma^2=\int_{\R^d}K(z)dz=\int_{\R^d}C(z)dz=0,
	\]
	implying that (\ref{bmcov}) and the Breuer-Major theorem only say that $t^{d/2}=t^{1/2}$ is not the correct normalization. Indeed, as $t\rightarrow\infty$ we have
	\begin{eqnarray*}
		w_t&=&w_{1,t}=2\int_0^t C(z)dz=\int_0^t |1+z|^{2H}+|1-z|^{2H}-2|z|^{2H}dz\\
		&=&\frac{1}{2H+1}\left((t+1)^{2H+1}+(t-1)^{2H+1}-2t^{2H+1}\right)\sim 2H t^{2H-1},
	\end{eqnarray*}
	which means that $w_t$ is regularly varying with index $\alpha=2H-1\in(-1,0)$. Therefore, applying Theorem \ref{isoasymptotics} we obtain that the correct normalization is $t^{1/2}w_t^{1/2}\sim \sqrt{2H} t^{H}$ and $\forall D,L\in\mathcal{D}$
	\[
	{\rm Cov}\left(\frac{\int_{tD}B_x dx}{t^{1/2}w_t^{1/2}}, \frac{\int_{tL}B_y dy}{t^{1/2}w_t^{1/2}}\right)\rightarrow \frac{1}{2}\int_{0}^\infty -\frac{d}{dl}(g_{D,L}(l)+g_{D,L}(-l)) l^{2H-1} dl
	\]
	which, if $D=[0,r]$ and $L=[0,s]$, is the covariance function of a fractional Brownian motion with Hurst index $H\in(0,1/2)$ (see Remark \ref{comparison}).
\end{example}

\subsection{The long memory case.}\label{sectionlongshort}
Another frequent assumption is $C(z)=\rho(|z|)$ radial with $\rho$ regularly varying with index $-\beta\in(-d/R,0)$ (see e.g. \cite{L} for some examples), that is:
\[
\rho(r)=\frac{\ell(r)}{r^{\beta}}, \quad \quad r>0, 
\]
with $\ell:\R_+\rightarrow\R$ slowly varying (see Section \ref{regularly varying section}).
In this case, we say that we are in the long-memory case. As $|z|\rightarrow\infty$, we have
\[
K(z)=k(|z|)=\sum_{q=R}^\infty q!a_q^2\rho^q(|z|)\sim R! a_R^2\rho^R(|z|)=R! a_R^2 \frac{\ell^R(|z|)}{|z|^{R\beta}},
\]
implying that $k:\R_+\rightarrow\R$ is regularly varying with index $-R\beta\in(-d,0)$. Therefore, reasoning as in (\ref{K rrv ascov}) we obtain
\begin{eqnarray}
	\lim_{t\rightarrow\infty}{\rm Cov}\left(\frac{\int_{tD}\varphi(B_x) dx}{t^{d}k(t)^{1/2}}, \frac{\int_{tL}\varphi(B_y) dy}{t^{d}k(t)^{1/2}}\right)&=& \int_{\R^d} g_{D,L}(z)|z|^{-R\beta}dz\nonumber \\
	&=&\int_{D}\int_L|x-y|^{-R\beta}dxdy,\label{dmcov}
\end{eqnarray}
\noindent
and a correct normalization turns out to be $$t^{d}k(t)^{1/2}\sim\sqrt{R!\,a_R^2}\,t^{d-\frac{R\beta}{2}}\ell^{R/2}(t), \quad \quad \text{as }t\rightarrow\infty,$$i.e. the one usually observed in the long memory context (see e.g. \cite{LO}).
Note that $C\notin L^R(\R^d)$, so we are not in the Breuer-Major case, and 
\begin{equation}
	\label{uffa2}
	w_t= \sum_{q=R}^\infty a_q^2w_{q,t}\sim a_R^2w_{R,t}\quad \quad \text{as $t\rightarrow\infty$},
\end{equation}
which by Proposition \ref{nonviceversa} is regularly varying with index $\alpha=d-R\beta \in(0,d)$. For this reason, we can apply Theorem \ref{isoasymptotics}, obtaining
\begin{eqnarray*}
	\lim_{t\rightarrow\infty}{\rm Cov}\left(\frac{\int_{tD}\varphi(B_x) dx}{t^{d/2}w_t^{1/2}}, \frac{\int_{tL}\varphi(B_y) dy}{t^{d/2}w_t^{1/2}}\right)=\frac{\alpha}{\omega_{d-1}}\int_{D}\int_L|x-y|^{\alpha-d}dxdy,
\end{eqnarray*}
which is exactly (\ref{dmcov}) (indeed, $w_{t}\sim \frac{\omega_{d-1}}{\alpha}t^{d}k(t)$ by  Proposition \ref{nonviceversa}).

The first limit theorems in the long memory context were proved in the seminal works \cite{DM}, \cite{Rosenblatt}, \cite{Taqqu} and then extended in many directions. What happens in this case is that (\ref{set indexed phi}) (suitably normalized by $t^d k(t)$) has asymptotically an Hermite distribution of order $R$, which is not Gaussian if $R\ge2$. Moreover, the result can be extended to multi-dimensional (and stronger type) central limit theorems, see e.g. \cite{LO}. 

Summarizing, if $B$ has a radial covariance function $C(z)=\rho(|z|)$ with $\rho$ regularly varying with index $-\beta\in(-d/R,0)$, then we can compute the asymptotic covariances of $B$ using (\ref{dmcov}) or Theorem \ref{isoasymptotics} (since $w_t$ is regularly varying with index $\alpha=d-R\beta$). Moreover, the forementioned limit theorems apply, yielding an Hermite limiting distribution. However, Theorem \ref{isoasymptotics} allows to obtain the asymptotic covariances in many other situations in which (\ref{dmcov}) does not hold, whenever $w_t$ is regularly varying with index $\alpha>0$ and $k$ is not regularly varying (see Example \ref{Berry2}).

\subsection{The Berry's case.}\label{sectionberry}
The Berry's random wave model is a Gaussian field which arises as the local scaling limit of a variety of random fields on two-dimensional manifolds, see \cite{xxx} for the details. It was first introduced by Berry in \cite{Berry}, and is used in quantum mechanics to model the Laplace eigenfunctions of classically chaotic billiards with large eigenvalue (see also \cite{NPR} and the references therein). It is defined as a smooth, stationary, centered Gaussian field $B$ on the plane ($d=2$) with radial covariance function $$C(z)=\rho(|z|)=J_0(|z|),$$ where $J_0$ is the Bessel function of the first kind of order $0$  (see e.g. \cite{krasikov}), with series expansion
\[
J_0(r)=\sum_{j=0}^\infty\frac{(-1)^{j}}{(j!)^2}\left(\frac{r}{2}\right)^{2j}
\]
and asymptotic behavior
\begin{equation}\label{asympBerry}
	J_0(r)=\sqrt{\frac{2}{\pi}}\,r^{-\frac12}\,\cos\left(r-\frac{\pi}4\right) + O\left({r^{-\frac32}}\right)\quad \mbox{as $r\to\infty$}.
\end{equation}
Note that $\rho^q$ is not regularly varying. This means that we can not use Proposition \ref{nonviceversa} to prove that $w_{q,t}$ is regularly varying, as we did in Section \ref{sectionlongshort}. Nevertheless, this latter fact is  given by the following lemma (for $q\neq 1$) and is a consequence of the results in \cite{GMT}. 
\begin{remark}\label{rem:w1}
	For $q=1$, $w_{1,t}$ is not regularly varying. To prove this fact, consider $J_1$, the Bessel function of the first kind of order $1$ (see e.g. \cite{krasikov}), with series expansion
	\[
	J_1(r)=\left(\frac{r}{2}\right)\sum_{j=0}^\infty\frac{(-1)^{j}}{j!(j+1)!}\left(\frac{r}{2}\right)^{2j}
	\]
	and asymptotic behavior
	\[
	J_1(r)=\sqrt{\frac{2}{\pi}}\,r^{-\frac12}\,\cos\left(r-\frac{3}4 \pi\right) + O\left({r^{-\frac32}}\right)\quad \mbox{as $r\to\infty$}.
	\]
	Note that using the series expansions of $J_0$ and $J_1$ we have
	\[
	\frac{d}{dr}\left( J_1(r)r \right)=J_0(r)r. 
	\]
	Therefore, since
	\[
	w_{1,t}=2\pi\int_0^t J_0(r)r \,dr= J_1(t)t=2\pi\sqrt{\frac{2}{\pi}}\,t^{\frac12}\,\cos\left(t-\frac{3}4 \pi\right) + O\left({t^{-\frac12}}\right)\quad \mbox{as $t\to\infty$},
	\]
	$w_{1,t}$ is not a regularly varying function, and Theorem \ref{isoasymptotics} can not be applied to compute the asymptotic covariances of $\int_{tD}H_1(B_x)dx=\int_{tD}B_x dx$ and $\int_{tL}H_1(B_x)dx=\int_{tL}B_x dx$ (recall that $H_1(x)=x$ is the first Hermite polynomial). 
	
	Nevertheless, with a different argument, something can be said about the variance of $\int_{tD}B_x dx=\int_{tD}H_1(B_x)dx$. Indeed, we have the formula (see e.g. \cite{Schoenberg})
	\[
	C(z)=J_0(|z|)=\frac{1}{\omega_{d-1}}\int_{S^{d-1}} e^{i\langle x,\theta \rangle}d\theta.
	\]
	and after applying Fubini-Tonelli theorem we obtain
	\[
	{\rm Var}\left(\int_{tD} B_x dx \right)=\int_{tD} \int_{tD} J_0(|x-y|)dxdy=\frac{1}{\omega_{d-1}}\int_{S^{d-1}} |\mathcal{F}[\mathbf{1}_{tD}]|^2 (\theta) d\theta=\frac{t^{2d}}{\omega_{d-1}}\int_{S^{d-1}} |\mathcal{F}[\mathbf{1}_D]|^2 (t\theta) d\theta,
	\]
	where $\mathcal{F}$ denotes the Fourier transform.
	Therefore, if $|\mathcal{F}[\mathbf{1}_D](x)|=o(|x|^{-d/2})$, we have
	\[
	{\rm Var}\left(\int_{tD} B_x dx \right)=o(t^{d}), \quad \text{as }t\rightarrow\infty.
	\]
	This fact will be needed in Example \ref{Berry1}.
\end{remark}
\begin{lemma}\label{ratesberry}
	Let $(B_x)_{x\in\R^2}$ be the Berry's random wave model, and recall the notation introduced in the first part of Section \ref{nnfGF}. Then for $q\ge2$, as $t\rightarrow\infty$, we have
	\begin{equation*}\label{rate}
		w_{q,t} \sim \left\{\begin{array}{lll}
			c\,t&&\mbox{if $q=2$}\\
			c \log(t)&&\mbox{if $q=4$}\\
			c&&\mbox{if $q=3$ or  $q\ge5$}
		\end{array}
		\right. ,
	\end{equation*} 
	where $c\in(0,\infty)$ depends on $q$, but does not depend on $t$.
\end{lemma}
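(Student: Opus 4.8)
The plan is to reduce everything to a one–dimensional radial integral and then read off the growth from the large–$r$ asymptotics of the Bessel function. In $d=2$, passing to polar coordinates gives
\[
w_{q,t} = q!\int_{\{|z|\le t\}} J_0(|z|)^q\,dz = 2\pi\, q!\int_0^t J_0(r)^q\, r\,dr ,
\]
so the whole statement concerns the behaviour of $\int_0^t J_0(r)^q r\,dr$ as $t\to\infty$. The case $q=1$ is the cleanest: using the Bessel identity $\frac{d}{dr}\bigl(r J_1(r)\bigr)=r J_0(r)$ we get $\int_0^t r J_0(r)\,dr = t J_1(t)$, hence $w_{1,t}=2\pi\, t J_1(t)$. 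Since $J_1$ has infinitely many positive zeros across which it changes sign, $w_{1,t}$ is not eventually of constant sign and therefore cannot be positive on any $[a,\infty)$; by Definition \ref{regularly varying} it is not regularly varying (with any index, \emph{a fortiori} not with $\alpha>-1$).

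For $q\ge 2$ I would substitute the expansion (\ref{asympBerry}). Raising $J_0(r)=\sqrt{2/\pi}\,r^{-1/2}\cos(r-\pi/4)+O(r^{-3/2})$ to the power $q$, the leading contribution to the integrand $J_0(r)^q r$ is
\[
\Bigl(\tfrac{2}{\pi}\Bigr)^{q/2} r^{\,1-q/2}\cos^q\!\bigl(r-\tfrac{\pi}{4}\bigr),
\]
while every cross term produced by the $O(r^{-3/2})$ remainder, after multiplication by $r$, decays like $r^{-q/2}$ or faster, hence is absolutely integrable for $q\ge 3$ (and contributes only a lower–order $O(\log t)$ for $q=2$). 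The key elementary fact is the period average $m_q:=\frac{1}{2\pi}\int_0^{2\pi}\cos^q\theta\,d\theta$, which equals $\binom{q}{q/2}2^{-q}>0$ for even $q$ and $0$ for odd $q$. Writing $\cos^q(r-\pi/4)=m_q+\bigl(\cos^q(r-\pi/4)-m_q\bigr)$, the constant part drives the divergence: $\int_0^t r^{1-q/2}\,dr$ grows like $t$ for $q=2$, like $\log t$ for $q=4$, and converges for even $q\ge 6$. For odd $q$ the mean $m_q$ vanishes, so no power–type divergence survives.

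It then remains to show that the oscillatory part $\cos^q(r-\pi/4)-m_q$, integrated against $r^{1-q/2}$, always yields a convergent integral, so that it only affects the constant $c$ and never the stated order. Expanding the mean–zero part into a finite Fourier series $\sum_{k\ge1}c_k\cos(k(r-\pi/4))$, each term is an oscillatory integral $\int^t r^{1-q/2}\cos(kr+\phi)\,dr$ which, after one integration by parts, equals a boundary term of size $O(t^{1-q/2})\to0$ plus an absolutely convergent integral. The genuinely delicate case is $q=3$, where $r^{1-q/2}=r^{-1/2}$ is \emph{not} absolutely integrable and convergence relies entirely on this cancellation (the vanishing of $m_3$); for $q\ge5$ the exponent $1-q/2\le-3/2$ already gives absolute integrability. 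This pins down the three regimes $t$, $\log t$, constant.

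Finally one must check $c\in(0,\infty)$; finiteness is part of the above. For positivity the even cases are immediate, since $J_0^q\ge0$ makes $c=2\pi\, q!\int_0^\infty J_0(r)^q r\,dr$ the integral of a nonnegative, not identically zero function. For odd $q\ge3$ the integrand changes sign and I would invoke the spectral interpretation behind \cite{GMT}: $J_0(|z|)=\hat\sigma(z)$ is the Fourier transform of the uniform probability measure $\sigma$ on $S^1$, so $J_0(|z|)^q=\widehat{\sigma^{*q}}(z)$ and, by Fourier inversion at the origin, $\int_{\R^2}J_0(|z|)^q\,dz=(2\pi)^2 f_{\sigma^{*q}}(0)$, where $f_{\sigma^{*q}}$ is the (continuous, for $q\ge3$) density of a sum of $q$ independent uniform unit vectors; its strict positivity at $0$ is exactly what \cite{GMT} supplies. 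I expect the main obstacle to be this last step, together with the careful treatment of the conditionally convergent oscillatory integral at $q=3$, where naive absolute bounds fail and the sign cancellation must be used explicitly.
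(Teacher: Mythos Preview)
Your proposal is correct, and for $q\ge2$ it follows essentially the same path as the paper (expand $J_0^q$ via (\ref{asympBerry}), isolate the period mean $m_q$, and treat the oscillatory remainder); the paper compresses this into the single remark that the even case follows from $2\cos^2(x)=1+\cos(2x)$ together with (\ref{asympBerry}), and defers the odd $q\ge3$ case entirely to \cite{GMT}. Your argument is more self-contained in that you actually prove convergence of the conditionally convergent integral at $q=3$ by integration by parts, reserving the citation of \cite{GMT} only for the strict positivity of the limit.

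The one place where you take a genuinely different route is $q=1$. The paper argues by contradiction: if $w_{1,t}$ were regularly varying with index $\alpha>-1$, then applying Theorem~\ref{isoasymptotics} with $D=L=\{|x|\le1\}$ would force ${\rm Var}\bigl(\int_{tD}B_x\,dx\bigr)$ to be regularly varying, contradicting \cite[(22)]{MN}. Your approach is direct and more elementary: from $\frac{d}{dr}(rJ_1(r))=rJ_0(r)$ you get the closed form $w_{1,t}=2\pi\,tJ_1(t)$, and the sign changes of $J_1$ immediately rule out regular variation via Definition~\ref{regularly varying}. This avoids both the forward reference to Theorem~\ref{isoasymptotics} and the external reference to \cite{MN}, at the cost of using a classical Bessel identity. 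Either argument is perfectly adequate; yours has the advantage of giving the exact expression for $w_{1,t}$.
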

\begin{proof}
	In the sequel, let $c$ be a positive constant which may change value.
	For $q=2$ even, by (\ref{asympBerry}) and the trigonometric identity $2 \cos^2(x)=1+\cos(2x)$ we have
	\begin{align*}
		w_{2,t}&=2\pi\int_0^t J_0^2(r)r\,dr=c\,\left(\int_1^t {\cos^{2}(r-\pi/4)}dr+O\left(\int_0^t \frac{dr}{r} \right) \right)\\
		&= c\,\left(t-1+\int_1^t {\cos(2r-\pi/2)}dr+O\left(\int_0^t \frac{dr}{r} \right) \right)\\
		&= c\,\left( t+O(\log(t))\right)\sim c t.
	\end{align*}
	For $q=4$, by (\ref{asympBerry}) and the trigonometric identity $8 \cos^4(x)=2(1+\cos(2x))^2=2+4\cos(2x)+2\cos^2(2x)=2+4\cos(2x)+1+\cos(4x)$, we have
	\begin{align*}
		w_{4,t}&=2\pi\int_0^t J_0^4(r)r\,dr=c\,\left(\int_1^t \frac{\cos^{4}(r-\pi/4)}{r}dr+O\left(\int_0^t \frac{dr}{r^2} \right) \right)\\
		&= c\,\left(3\log(t)+4\int_1^t {\cos(2r-\pi/2)}dr+\int_1^t {\cos(4r-\pi)}dr+O\left(\int_0^t \frac{dr}{r^2} \right) \right)\\
		&= c\,\left( \log(t)+O(1)\right)\sim c \log(t).
	\end{align*}
	If $q\ge 6$ is even, $w_{q,t}>0$, $w_{q,t}$ increasing in $t$, and $w_{q,t}=O(1)$ (see Remark \ref{remainderrem}).
	Therefore, we have $w_{q,t}\sim c>0$ for some positive constant $c>0$.
	
	If $q\ge 3$ is odd, the fact that $w_{q,t}$ converges to a positive constant is more difficult to see, and is proved in \cite{GMT} by means of Pearson's random walks. 
\end{proof}

\begin{remark}\label{remainderrem}
	Note that since $|C(z)|=|J_0(|z|)|\le 1$ and (\ref{asympBerry}) holds, we have 
	\[
	\left|J_0^5(r)\right|\le c\,\left( 1\,\wedge\frac{1}{r^{5/2}}\right), \quad r>0,
	\]
	where $c$ is a positive constant. Moreover, since $|C(z)|=|J_0(|z|)|\le 1$ we have
	\[
	\int_{\R^d}\left|C^q(z)\right| dz\le \int_{\R^d}\left|C^5(z)\right|dz=2\pi\int_0^\infty \left|J_0^5(r)\right| r \,dr\le c\,\int_1^\infty \frac{dr}{r^{3/2}}<\infty,\quad \quad q\ge5.
	\]
	Therefore, we obtain the uniform bound
	\begin{equation}
		\label{boundreminder}
		\sum_{q=5}^\infty a_q^2 w_{q,t}\le  \|{\varphi}\|^2_{L^2(\R_+,\gamma(dx))}\int_{\R^d}\left|C^5(x)\right|dx<\infty.
	\end{equation} 
\end{remark}

Combining Lemma \ref{ratesberry} with Theorem \ref{isoasymptotics}, we are now able to compute the asymptotic covariances
\[
\lim_{t\rightarrow\infty}{\rm Cov}\left(\frac{\int_{tD}\varphi(B_x) dx}{t^{d/2}w_t^{1/2}}, \frac{\int_{tL}\varphi(B_y) dy}{t^{d/2}w_t^{1/2}}\right),
\]
where $B=(B_x)_{x\in\R^2}$ is the Berry's random wave model, 
for a large class of functions $\varphi$ and compact domains $D,L$. If we further apply \cite[Theorem 2]{MN} and the Peccati-Tudor multi-dimensional fourth moment theorem (see \cite{PT}), we are able to prove multi-dimensional central limit theorems for functionals of the Berry's random wave model. All these facts are explained  in the upcoming Examples \ref{Berry5}-\ref{Berry1}. Before we move on to the latter, to  ease the exposition, let us state a simplified version of the Peccati-Tudor theorem in our context.

\begin{theorem}\cite[Proposition 1]{PT} \label{PTtheo}
	Let $B=(B_x)_{x\in\R^d}$ be a continuous, centered, stationary Gaussian field with covariance function $C$. Let $\mathcal{D'}$ be a class of compact sets in $\R^d$.  Assume that the following three conditions hold:
	\begin{enumerate}
		\item ${\rm Var}\left(\int_{tD}H_q(B_x)dx\right)>0$, for every $D\in\mathcal{D'}$ and $t$ large enough.
		
		\item For every $D\in\mathcal{D'}$ 
		$$ \frac{\int_{tD}H_q(B_x)dx}{\sqrt{{\rm Var}\left(\int_{tD}H_q(B_x)dx\right)}}\overset{law}{\rightarrow}N,$$  where $N\sim N(0,1)$ has a standard Gaussian distribution.
		
		\item For every $D,L\in\mathcal{D'}$, we have
		\[
		{\rm Cov}\left(\frac{\int_{tD}H_q(B_x)dx}{\sqrt{{\rm Var}\left(\int_{tD}H_q(B_x)dx\right)}},\frac{\int_{tL}H_q(B_y)dy}{\sqrt{{\rm Var}\left(\int_{tL}H_q(B_y)dy\right)}}\right)\rightarrow b(D,L),
		\]
		where $b(D,L)$ is a constant depending only on $D$ and $L$.
	\end{enumerate}
	Then, as $t\rightarrow\infty$ we have the multi-dimensional central limit theorem
	\[
	\left(\frac{\int_{tD}H_q(B_x)dx}{  \sqrt{{\rm Var}\left(\int_{tD}H_q(B_x)dx\right)} }\right)_{D\in\mathcal{D'}}\overset{f.d.d.}{\rightarrow} G=\left( G(D)\right)_{D\in\mathcal{D'}}
	\]
	where $G$ is a centered, set-indexed Gaussian field with covariances
	\[
	{\rm Cov}(G(D),G(L))=b(D,L).
	\]
\end{theorem}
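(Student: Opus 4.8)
The statement is a restatement, in the present set-indexed language, of the Peccati--Tudor multivariate fourth moment theorem, and the natural plan is to prove it by the Malliavin--Stein method. Let $\mathfrak H$ be the real separable Hilbert space canonically attached to the stationary field $B$, so that $B$ is an isonormal Gaussian process over $\mathfrak H$. The isometry relation $\E[H_q(B_x)H_q(B_y)]=q!\,C(x-y)^q$ shows that, for each fixed $t$ and each $D\in\mathcal D'$, the random variable $\int_{tD}H_q(B_x)\,dx$ is a multiple Wiener--It\^o integral of order $q$, say $I_q(f_{t,D})$ with a symmetric kernel $f_{t,D}\in\mathfrak H^{\odot q}$. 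By condition (i) the variance is positive for $t$ large, so we may set $F_t(D):=I_q(f_{t,D})/\sqrt{{\rm Var}(I_q(f_{t,D}))}$.

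Since finite-dimensional convergence is tested on finite subfamilies, fix $D_1,\dots,D_m\in\mathcal D'$; it suffices to prove that $(F_t(D_1),\dots,F_t(D_m))$ converges in law to $\mathcal N(0,\Sigma)$, where $\Sigma_{ij}=b(D_i,D_j)/\sqrt{b(D_i,D_i)\,b(D_j,D_j)}$. Condition (iii) together with the chosen normalization says exactly that the covariance matrix of this vector converges to $\Sigma$. The task is thus to upgrade this convergence of second moments, combined with the one-dimensional limits in (ii), to joint Gaussian convergence, after which the limiting field $G$ automatically has ${\rm Cov}(G(D),G(L))=b(D,L)/\sqrt{b(D,D)\,b(L,L)}$.

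The engine is the multivariate Malliavin--Stein inequality: the distance of the law of $(F_t(D_1),\dots,F_t(D_m))$ from $\mathcal N(0,\Sigma)$ is bounded by the gap between the covariance matrices (which vanishes by (iii)) plus a term of the form $\sum_{i,j}\E\big[\,|\Sigma_{ij}-\Gamma_t^{ij}|\,\big]$, where $\Gamma_t^{ij}:=q^{-1}\langle \mathrm D F_t(D_i),\mathrm D F_t(D_j)\rangle_{\mathfrak H}$ and $\mathrm D$ denotes the Malliavin derivative (not to be confused with the sets $D_i$). For the diagonal entries, assumption (ii) is, by the Nualart--Peccati fourth moment theorem, equivalent to the vanishing as $t\to\infty$ of every contraction norm $\|f_{t,D_i}\otimes_r f_{t,D_i}\|$ for $r=1,\dots,q-1$; this forces $\Gamma_t^{ii}\to\Sigma_{ii}$ in $L^2$.

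I expect the main obstacle to be the off-diagonal control, and here lies the essence of the Peccati--Tudor phenomenon: no joint fourth-moment hypothesis is needed in a fixed chaos. Using the product formula for multiple integrals together with a Cauchy--Schwarz estimate, each cross-contraction norm $\|f_{t,D_i}\otimes_r f_{t,D_j}\|$ is dominated by a geometric mean of the diagonal contraction norms, so the diagonal control just obtained propagates to the off-diagonal terms and, invoking hypercontractivity of the $q$-th chaos to pass from $L^2$ to $L^1$, yields $\Gamma_t^{ij}\to\Sigma_{ij}$ in $L^1$. Feeding this back into the multivariate Malliavin--Stein bound gives $(F_t(D_1),\dots,F_t(D_m))\to\mathcal N(0,\Sigma)$; since the subfamily was arbitrary, this is precisely the claimed finite-dimensional convergence.
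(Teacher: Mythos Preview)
The paper does not prove this statement at all: Theorem~\ref{PTtheo} is simply quoted from \cite[Proposition~1]{PT} and used as a black box in the applications of Section~\ref{nnfGF}. Your proposal, by contrast, sketches an actual proof, and the sketch is essentially correct.

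A few remarks on the comparison with the original argument in \cite{PT}. The Peccati--Tudor paper predates the Malliavin--Stein method (introduced in \cite{NP}), so the original proof does not use the multivariate Stein bound you invoke. Instead, it proceeds more directly: given that each coordinate lies in a fixed Wiener chaos, the Nualart--Peccati criterion (which \emph{was} available) converts (ii) into the vanishing of all self-contractions $\|f_{t,D_i}\otimes_r f_{t,D_i}\|$, and then the key Cauchy--Schwarz-type inequality $\|f\otimes_r g\|^2\le\|f\otimes_{q-r}f\|\,\|g\otimes_{q-r}g\|$ forces the cross-contractions to vanish as well; the product formula then shows that every joint moment of the vector converges to the corresponding Gaussian moment, yielding the CLT by the method of moments. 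Your Malliavin--Stein route is a legitimate and by-now standard modernization of this argument (see e.g.\ \cite[Chapter~6]{bluebook}), and it has the advantage of giving quantitative bounds, but the decisive combinatorial step---controlling mixed contractions by diagonal ones---is the same in both approaches.

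One small slip: you write that hypercontractivity is needed ``to pass from $L^2$ to $L^1$''. On a probability space $L^2$ convergence already implies $L^1$ convergence, so hypercontractivity plays no role at that step; it would be needed only if you wanted to upgrade to $L^p$ for $p>2$.
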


\begin{example}{\textbf{(Berry with $R\ge5$.)}}\label{Berry5} If $R\ge5$, then $C\in L^R(\R^2)$ and we are in the Breuer-Major case of Section \ref{sectionBM}. In particular, by Lemma \ref{ratesberry} we have that $\sigma^2>0$ and $w_t$ is slowly varying (see (\ref{boundreminder})). Therefore, the asymptotic covariances are given by (\ref{bmcov}) or (applying Theorem  \ref{isoasymptotics}) by (\ref{nonbmcov}), and Gaussian fluctuations follow by multi-dimensional Breuer-Major theorems of the form (\ref{bmmulti})-(\ref{bmmultispecialcase}).
\end{example}
\begin{example}{\textbf{(Berry with $R=4$.)}}\label{Berry4}
	When $R=4$, note that $K(z)=k(|z|)$ is radial, non-integrable and $k$ is not regularly varying, implying that we are not in the classical cases discussed in Sections \ref{sectionBM} and \ref{sectionlongshort}. Despite this, $w_t$ is slowly varying (see Lemma \ref{ratesberry}, (\ref{expansion w}) and (\ref{boundreminder})) and in particular
	\[
	w_t\sim a_4^2w_{4,t}\sim c \,\log(t) \quad \quad \text{as }t\rightarrow\infty,
	\] 
	where $c$ is a positive constant.
	Then, Theorem \ref{isoasymptotics} can be applied to obtain the asymptotic covariances
	\begin{equation}
		\label{asympcovBerry4}
		\lim_{t\rightarrow\infty}{\rm Cov}\left(\frac{\int_{tD}\varphi(B_x) dx}{t^{d/2}w_t^{1/2}}, \frac{\int_{tL}\varphi(B_y) dy}{t^{d/2}w_t^{1/2}}\right)={\rm Vol}(D\cap L),
	\end{equation}
	for every $D,L\in\mathcal{D}$, where $\mathcal{D}$ is the class of compact sets with finite perimeter introduced in \eqref{case1}. 
	Moreover, by using reduction techniques (see e.g. the proof of \cite[Proposition 4]{MN}) in (\ref{Berry4conv}) and the spectral CLT \cite[Theorem 2]{MN} in (\ref{Berryconv4bis}), we have for $D\subseteq \R^2$ compact with ${\rm Vol}(D)>0$:
	\begin{equation}
		\label{Berry4conv}
		\E\left[\left(\frac{\int_{tD}\varphi(B_x)dx}{\sqrt{{\rm Var}\left(\int_{tD}\varphi(B_x)dx\right)}}-sgn(a_4)\frac{\int_{tD}H_4(B_x)dx}{\sqrt{{\rm Var}\left(\int_{tD}H_4(B_x)dx\right)}}\right)^2\right]\rightarrow0
	\end{equation}
	and 
	\begin{equation}
		\label{Berryconv4bis}
		\frac{\int_{tD}H_4(B_x)dx}{\sqrt{{\rm Var}\left(\int_{tD}H_4(B_x)dx\right)}}\overset{law}{\rightarrow}N(0,1).
	\end{equation}
	Combining (\ref{asympcovBerry4}), (\ref{Berry4conv}), (\ref{Berryconv4bis}) and Theorem \ref{PTtheo} with $\mathcal{D}'=\mathcal{D}$, we obtain
	\begin{equation}
		\label{NICE}
		\left(\frac{\int_{tD}\varphi(B_x)dx}{t^{d/2}w_t^{1/2}}\right)_{D\in\mathcal{\mathcal{D}}}\overset{f.d.d.}{\to}(G(D))_{D\in\mathcal{\mathcal{D}}},
	\end{equation}
	where $G=(G(D))_{D\in\mathcal{D}}$ is a Gaussian noise with covariances given by (\ref{asympcovBerry4}). Note that we have proved \eqref{NICE} on $\mathcal{D}$, excluding the compact sets with infinite perimeter, since in the latter cases Theorem \ref{isoasymptotics} can not be applied as in \eqref{asympcovBerry4} to verify condition (iii) of Theorem \ref{PTtheo}.
	Note also that (\ref{NICE}) could have been obtained (for a less general class of domains $\mathcal{A}$ defined in \cite{PV}) combining the results proved in \cite{PV} for the nodal lengths of the Berry's random wave model and the reduction principle in \cite{Vidotto}. 
\end{example}
\begin{example}{\textbf{(Berry with $R=3$.)}} \label{Berry3} If $a_4\neq0$, then $w_t\sim a_4^2w_{4,t}$ and we can reason exactly as in Example \ref{Berry4}. So we can assume that $R=3$ and $a_4=0$. If this is the case, by Lemma \ref{ratesberry}, (\ref{expansion w}) and (\ref{boundreminder}) we have that $w_t$ converges to a positive constant, so that it is slowly varying.  Then, by Theorem \ref{isoasymptotics} we have for every $D,L$ in the class $\mathcal{D}$ of compact sets with finite perimeter  (see \eqref{case1})
	\[\lim_{t\rightarrow\infty}{\rm Cov}\left(\frac{\int_{tD}\varphi(B_x) dx}{t^{d/2}w_t^{1/2}}, \frac{\int_{tL}\varphi(B_y) dy}{t^{d/2}w_t^{1/2}}\right)={\rm Vol}(D\cap L).\]
	Note that in this case $C\notin L^3(\R^d)$, so we are not in the Breuer-Major case, where the covariances can be obtained for $D,L$ compact domains as in (\ref{bmcov}). In fact, in this case $\int_{\R^d}C^3(z)dz$ is only conditionally convergent, but not absolutely convergent. Nevertheless, excluding compact domains with infinite perimeter, we can compute the asymptotic covariances using Theorem \ref{isoasymptotics}.
	
	To the best of our knowledge, limit theorems in the case $R=3$ and $a_4=0$ have never been proved. 
\end{example}
\begin{example}{\textbf{(Berry with $R=2$.)}} \label{Berry2}When $R=2$, we have (by Lemma \ref{ratesberry}, (\ref{expansion w}) and (\ref{boundreminder})), $c$ being a positive constant,
	\[
	w_t\sim a_2^2w_{2,t}\sim c\, t \quad \quad \text{as }t\rightarrow\infty.
	\]
	Again, we are not in the classical cases discussed in Section \ref{sectionBM} or \ref{sectionlongshort}, but $w_t$ is regularly varying with index $\alpha=1$. Then, by Theorem \ref{isoasymptotics} we have for every $D,L$ in the class $\mathcal{D}$ of compact sets with finite perimeter  (see (\ref{case1}))
	\begin{equation}
		\label{asympcovBerry2}
		\lim_{t\rightarrow\infty}{\rm Cov}\left(\frac{\int_{tD}\varphi(B_x) dx}{t^{d/2}w_t^{1/2}}, \frac{\int_{tL}\varphi(B_y) dy}{t^{d/2}w_t^{1/2}}\right)=\frac{1}{2\pi}\int_{S^1}d\theta\int_0^\infty dl\left(-\frac{d}{dl}g_{D,L}(l\theta)\right)l=  \frac{1}{2\pi}\int_D\int_L \frac{dxdy}{|x-y|}.
	\end{equation}
	For the last equality, see Remark \ref{comparison}.
	Note that the integral on the RHS of \eqref{asympcovBerry2} is finite, since $g_{D,L}$ has bounded derivative and compact support if $D,L$ are compact sets with finite perimeter (see Proposition \ref{prop crosscov}).
	
	Let us define the following class of compact sets:
	\[
	\mathcal{D}_O:=\left\{D\subseteq \R^d \text{ compact } : {\rm Vol}(D)>0, |\mathcal{F}[\mathbf{1}_D](x)|=O(|x|^{-d/2})\text{ as $|x|\rightarrow\infty$}\right\},
	\]
	where $\mathcal{F}$ denotes the Fourier transform. For example, $D\in\mathcal{D}_O$ when $D$ is compact, $D=\bar{\mathring{D}}$ and $\partial D$ is smooth with non-vanishing Gaussian curvature, see \cite{Brandolini}.
	By using reduction techniques (see e.g. the proof of \cite[Proposition 4]{MN}) in (\ref{Berry2conv}) and the spectral CLT \cite[Theorem 2]{MN}, we obtain for every $D\in\mathcal{D}_O$ :
	\begin{equation}
		\label{Berry2conv}
		\E\left[\left(\frac{\int_{tD}\varphi(B_x)dx}{\sqrt{{\rm Var}\left(\int_{tD}\varphi(B_x)dx\right)}}-sgn(a_2)\frac{\int_{tD}H_2(B_x)dx}{\sqrt{{\rm Var}\left(\int_{tD}H_2(B_x)dx\right)}}\right)^2\right]\rightarrow0
	\end{equation}
	and 
	\begin{equation}
		\label{Berryconv2bis}
		\frac{\int_{tD}H_2(B_x)dx}{\sqrt{{\rm Var}\left(\int_{tD}H_2(B_x)dx\right)}}\overset{law}{\rightarrow}N(0,1).
	\end{equation}
	
	Moreover, combining (\ref{asympcovBerry2}), (\ref{Berry2conv}), (\ref{Berryconv2bis}) and Theorem \ref{PTtheo} with $\mathcal{D'}=\mathcal{D}\cap \mathcal{D}_O$, we obtain the multi-dimensional central limit theorem
	\begin{equation}
		\label{NICEbis}
		\left(\frac{\int_{tD}\varphi(B_x)dx}{t^{d/2}w_t^{1/2}}\right)_{D\in\mathcal{D}\cap \mathcal{D}_O}\overset{f.d.d.}{\to}(G'(D))_{D\in\mathcal{D}\cap \mathcal{D}_O},
	\end{equation}
	where the set-indexed Gaussian field $G'=(G'(D))_{D\in\mathcal{D}\cap \mathcal{D}_O}$ has covariances given by \eqref{asympcovBerry2}. Note that the restriction on $\mathcal{D}$ in \eqref{NICEbis} is needed to exclude the compact sets with infinite perimeter, since in the latter cases Theorem \ref{isoasymptotics} can not be applied as in \eqref{asympcovBerry2} to verify condition (iii) of Theorem \ref{PTtheo}. Moreover, the restriction on $\mathcal{D}_O$ in \eqref{NICEbis} is needed to conclude \eqref{Berryconv2bis} by means of Theorem \cite[Theorem 2]{MN}, verifying the condition (ii)  in Theorem \ref{PTtheo}.
\end{example}
\begin{example}{\textbf{(Berry with $R=1$.)}}\label{Berry1}
	If $\varphi$ is \textbf{linear}, then $\varphi(x)=a_1 x=a_1 H_1(x)$, where $H_1(x)=x$ is the first Hermite polynomial and $a_1\neq 0$ by definition of Hermite rank $R$.
	In this case, $w_t=a_1^2 w_{1,t}$ is not regularly varying, see Remark \ref{rem:w1}. Therefore, we can not apply Theorem \ref{isoasymptotics} to compute the asymptotic covariances.
	
	Let us now assume that $\varphi(x)$ is \textbf{not linear}, splitting the functional as follows 
	\[
	\int_{tD}\varphi(B_x)dx=a_1\int_{tD}H_1(B_x)dx+\int_{tD}\hat{\varphi}(B_x)dx,
	\]
	where $\hat{\varphi}:=\varphi-a_1H_1\neq0$  because $\varphi$ is not linear (i.e. $\exists\, q\ge2$ with $a_q\neq0$). In addition, recall the notation $\mathcal{D}$ for the class of compact sets with finite perimeter, and consider 
	\[
	\mathcal{D}_o:=\left\{D\subseteq \R^d \text{ compact } : {\rm Vol}(D)>0, |\mathcal{F}[\mathbf{1}_D](x)|=o(|x|^{-d/2})\text{ as $|x|\rightarrow\infty$}\right\}.
	\]
	For example, $D\in\mathcal{D}_o$ when $D$ is compact, $D=\bar{\mathring{D}}$ and $\partial D$ is smooth with non-vanishing Gaussian curvature, see \cite{Brandolini}.
	
	Then, we can study the asymptotic covariances and multi-dimensional fluctuations of $\int_{tD}\varphi(B_x)dx$, as we did in the previous examples,  following two steps:
	\begin{enumerate}
		\item We observe that as $t\rightarrow\infty$ (note that the equality holds by definition of $\hat{\varphi}$)
		\begin{equation}
			\label{Equivalence berry}
			\E\left[\left(\frac{\int_{tD}\varphi(B_x)dx}{t^{d/2}\left(\sum_{q=2}^\infty a_q^2 w_{q,t}\right)^{1/2}}-\frac{\int_{tD}\hat{\varphi}(B_x)dx}{t^{d/2}\left(\sum_{q=2}^\infty a_q^2 w_{q,t}\right)^{1/2}}\right)^2\right]=\frac{{\rm Var}\left(\int_{tD}a_1H_1(B_x)dx \right)}{t^d\sum_{q=2}^\infty a_q^2 w_{q,t}}\rightarrow0, \quad \quad\forall D\in\mathcal{D}_o.
		\end{equation}
		In fact, by Remark \ref{rem:w1} we have (note that $t^{d}=O\left(t^d\sum_{q=2}^\infty a_q^2 w_{q,t}\right)$ by Lemma \ref{ratesberry}, since $\exists\, q\ge2$ with $ a_q\neq0$)
		\[
		{\rm Var}\left(a_1\int_{tD}H_1(B_x) dx\right)=o\left(t^d\sum_{q=2}^\infty a_q^2 w_{q,t}\right), \quad \quad \forall D\in\mathcal{D}_o.
		\]
		\item Since the asymptotic $L^2$-equivalence \eqref{Equivalence berry} holds, the problem of studying the asymptotic covariances and the multi-dimensional Gaussian fluctuations of $\int_{tD}\varphi(B_x)dx$ is reduced to the same problem for $\int_{tD}\hat{\varphi}(B_x)dx$.  Hence, since $\hat{\varphi}$ has Hermite rank greater or equal than $2$, one can solve the problem using the results in the previous examples. For instance, if the Hermite rank of $\hat{\varphi}$ is $2$, then by Example \ref{Berry2} we have
		\[
		\left(\frac{\int_{tD}\hat{\varphi}(B_x)dx}{t^{d/2}\left(\sum_{q=2}^\infty a_q^2 w_{q,t}\right)^{1/2}}\right)_{D\in\mathcal{D}\cap \mathcal{D}_O}\overset{f.d.d.}{\to}(G'(D))_{D\in\mathcal{D}\cap \mathcal{D}_O},
		\]
		and combining the latter with \eqref{Equivalence berry} we obtain (note that $\mathcal{D}_o\subseteq \mathcal{D}_O$, so we add an additional restriction on the class of domains in order to use  \eqref{Equivalence berry}).
		\[
		\left(\frac{\int_{tD}\varphi(B_x)dx}{t^{d/2}\left(\sum_{q=2}^\infty a_q^2 w_{q,t}\right)^{1/2}}\right)_{D\in\mathcal{D}\cap \mathcal{D}_o}\overset{f.d.d.}{\to}(G'(D))_{D\in\mathcal{D}\cap \mathcal{D}_o},
		\]
		where the set-indexed Gaussian field $G'=(G'(D))_{D\in\mathcal{D}\cap \mathcal{D}_o}$ has covariances given by \eqref{asympcovBerry2}. 
	\end{enumerate}
	
\end{example}

\begin{remark}
	Note that all the arguments above could be generalized to prove analogous multi-dimensional central limit theorems for the $d$-dimensional Berry's random wave model (and partially extended to a larger class of fields, see \cite[Section 6]{MN}). We only focused on the case $d=2$ for the sake of brevity. 
\end{remark}
\section{Non-radial covariance functions.} \label{sectionfinal}When $d\ge2$, the covariance function $K$ of the field $A$ could be non-radial, but Theorem \ref{isoasymptotics} can still be applied taking $\mathcal{D}$ as the set of balls centered at a fixed point, see (\ref{case2}).

The goal of this section is showing that the problem \eqref{problem} for non-radial, continuous $K$ can be reduced (for $D,L$ balls centered at a fixed point) to the same problem for a radial covariance function $K_{\text{iso}}$, which will be defined in the sequel. 
This reduction principle will be proved in Proposition \ref{propnonradcov}.\\

In order to state Proposition \ref{propnonradcov}, we need to introduce some additional quantities. 

Let $A=(A_x)_{x\in\R^d}$ be a measurable, weakly stationary random field with \textbf{continuous} covariance function $K:\R^d \rightarrow \R$, $K(0)=1$. By Bochner's theorem, there exists a unique, symmetric, Borel probability measure $G$ on $\R^d$ such that 
\[
K(z)=\int_{\R^d}e^{i\langle x,\lambda \rangle}G(d\lambda).
\]
$G$ is called the \textbf{spectral measure} of $A$ (or associated to $K$).
 In other words, $K$ is the characteristic function of $\Lambda$, where $\Lambda\sim G$ is a random variable with values in $\R^d$. 
 
Note that we can always write
\[
K(z)=\E\left[e^{i\langle z, \Lambda \rangle}\right]=\E\left[e^{i|\Lambda|\langle z, \hat{\Lambda} \rangle}\right],
\]
where $|\Lambda|\sim \mu$  and $\hat{\Lambda}:=\mathbf{1}_{\Lambda\neq0}\,\Lambda/|\Lambda|\sim \sigma$ are respectively the random norm  and the random direction of $\Lambda$. We will refer respectively to the probability measures $\mu$ on $\R_+$ and $\sigma$ on $S^{d-1}$ as the \textbf{isotropic spectral measure} and \textbf{spherical spectral measure} of $A$ (or associated to $K$ or $G$).

\begin{remark}\label{rem rad sphere}
	Let us denote by $\nu$ the uniform probability measure on $S^{d-1}$. Note that $K$ is radial if and only if $\sigma=\nu$.
Indeed, $K$ is radial, i.e. $K(z)=K(Pz)$ for every $P$ orthogonal matrix, if and only if $P\Lambda\sim \Lambda$ for every $P$ orthogonal matrix, that is $\hat{\Lambda}$ is uniformly distributed on $S^{d-1}$ (i.e. $\sigma=\nu$). In particular, given a probability measure $\mu$ on $\R_+$, there is a unique continuous, radial covariance function with isotropic spectral measure $\mu$ (and spherical spectral measure $\nu$).
\end{remark}
Thanks to Remark \ref{rem rad sphere}, we can now give the crucial definition of this section.
\begin{definition}
	Let $A=(A_x)_{x\in\R^d}$ be a weakly stationary random field with continuous covariance function $K:\R^d\rightarrow \R$, isotropic spectral measure $\mu$ and spherical spectral measure $\sigma$, that is
	\[
	K(z)=\int_0^\infty \mu(dr)\int_{S^{d-1}}\sigma(d\theta)\,e^{i\langle r\theta,z \rangle}.
	\] 
	The \textbf{isotropic covariance function} of $A$ is the only continuous, radial covariance function $K_{\text{iso}}:\R^d \rightarrow\R$ with isotropic spectral measure $\mu$, namely (see Remark \ref{rem rad sphere})
	\[
	K_{\text{iso}}(z):=\int_0^\infty \mu(dr)\int_{S^{d-1}}\nu(d\theta)\,e^{i\langle r\theta,z \rangle},
	\]
	where $\nu(d\theta)=d\theta/\omega_{d-1}$ is the uniform probability measure on $S^{d-1}$.
\end{definition}

The main result of this section is the following.

\begin{proposition}\label{propnonradcov}
	Let $A=(A_x)_{x\in\R^d}$ be a measurable, weakly stationary random field with continuous covariance function $K:\R^d\rightarrow \R$ and isotropic covariance function $K_{\text{iso}}$. Then we have (recall the definition \eqref{w} of $w$)
	\begin{equation}\label{equiw}
		w_t^{\text{iso}}:=\int_{\{|z|\le t\}} K_{\text{iso}}(z)dz=\int_{\{|z|\le t\}}K(z)dz=w_t, \quad\quad t>0,
	\end{equation}
	and for $D,L$ balls centered at the same point
	\begin{equation}
		\label{equicov}
		\int_D\int_LK(x-y)dxdy=\int_{D}\int_{L}K_{\text{iso}}(x-y)dxdy.
	\end{equation}
	In particular, if $w_t=w_t^{\text{iso}}$ is regularly varying with index $\alpha\in(-1,d]$ (see \eqref{rv}), by Theorem \ref{isoasymptotics} we have, for $D,L$ balls centered at the same point (see \eqref{case2}), as $t\rightarrow\infty$
	\begin{align}
		\label{covfinalsec}
		{\rm Cov}\left(\frac{\int_{tD}A_x dx}{t^{d/2}w_t^{1/2}}, \frac{\int_{tL}A_x dx}{t^{d/2}w_t^{1/2}}\right)&=\frac{\int_{tD}\int_{tL}K(x-y)dxdy}{t^d w_t}\nonumber\\
		&=\frac{\int_{tD}\int_{tL}K_{\text{iso}}(x-y)dxdy}{t^dw_t^{\text{iso}}}\to \frac{1}{\omega_{d-1}}\int_{S^{d-1}}d\theta\int_0^\infty dl\left(-\frac{d}{dl}g_{D,L}(l\theta)\right)l^{\alpha}.
	\end{align}
\end{proposition}
\begin{proof}
	By Fubini, we have
	\[
	w_t=\int_{\{|z|\le t\}}K(z)dz=\int_{S^{d-1}}\sigma(d\theta)\int_{0}^\infty \mu(dr)\int_{\{|z|\le t\}}e^{i\langle z,r\theta \rangle}=\int_{S^{d-1}}\sigma(d\theta)\int_{0}^\infty \mu(dr)\mathcal{F}[\mathbf{1}_{\{|z|\le t\}}](r\theta),
	\]
	where $\mathcal{F}$ denotes the Fourier transform. Since  $\mathcal{F}[\mathbf{1}_{\{|z|\le t\}}](r\theta)$ is a radial function (i.e. does not depend on $\theta$) and $\sigma,\nu$ are probability measures on $S^{d-1}$, we have
	\[
	\int_{S^{d-1}}\sigma(d\theta)\int_{0}^\infty \mu(dr)\mathcal{F}[\mathbf{1}_{\{|z|\le t\}}](r\theta)=\int_{S^{d-1}}\nu(d\theta)\int_{0}^\infty \mu(dr)\mathcal{F}[\mathbf{1}_{\{|z|\le t\}}](r\theta)=\int_{\{|z|\le t\}}K_{\text{iso}}(z)dz=w_t^{\text{iso}},
	\]
	where the last equality follows again by Fubini. Therefore, \eqref{equiw} is proved. As a consequence of \eqref{equiw} and \eqref{recurrent}, we also obtain \eqref{equicov}. Moreover, \eqref{covfinalsec} follows by \eqref{equiw}, \eqref{equicov} and a direct application of Theorem \ref{isoasymptotics}.
\end{proof}

Proposition \ref{propnonradcov} allows to reduce the problem \eqref{problem} for $K$ non-radial (and $D,L$ balls centered at the same point) to the easiest problem \eqref{problem} for $K_{\text{iso}}$ radial. 

Note that we can also make the argument in the opposite direction: if $K_{\text{iso}}$ is a radial covariance function and we can check \eqref{rv} for $K_{\text{iso}}$, then by Proposition \ref{propnonradcov} we can apply Theorem \ref{isoasymptotics} not only for random fields with covariance function $K_{\text{iso}}$, but for every random field with isotropic covariance function $K_\text{iso}$.

To be more concrete about the possible applications of Proposition \ref{propnonradcov}, we conclude  with the following example.

\begin{example}
	Fix $d\ge2$ and consider the \textbf{non-Gaussian} random field $A=(A_x)_{x\in\R^d}$ given by the rescaled sum of $N$ independent random waves with random phases $\phi_i$, random directions $\theta_i$ and random wavenumbers $k_i$. Namely, we have
\begin{equation}
	\label{superpositions}
	A_x:=\sqrt{\frac{2}{{N}}}\sum_{i=1}^{N}\cos\left(k_i\langle x,\theta_i \rangle+\phi_i \right), 
\end{equation}
where $\phi_1,\theta_1,k_1,\dots,\phi_N,\theta_N,k_N$ are all independent and for all $i=1,\dots,N$ we have: $\phi_i$ is uniformly distributed on $[0,2\pi]$; $\theta_i\sim \sigma$ symmetric probability measure on $S^{d-1}$; $k_i\sim \mu$ probability measure on $(0,\infty)$. 

Note that when $\sigma$ is the uniform distribution on $S^{d-1}$ and $\mu=\delta_k$ for some constant $k>0$, random fields of the form \eqref{superpositions} are called random superposition of independent waves, and have been extensively studied as good local models for wavefunctions (i.e. eigenfunctions of the Laplacian), see e.g. \cite{Berry,NPR,PV,Vidotto}. 

Moreover, note that $A$ is \textbf{not stationary}, since $A_x$ and $A_y$ have not the same distribution in general. For instance, if $N=1$, in general $A_0=\cos(\phi_1)$ has not the same distribution of $A_x=\cos(k_1\langle x,\theta_1 \rangle +\phi_1)$ for every $x\neq0$.

However, $A$ is \textbf{weakly stationary}, implying that Theorem \ref{isoasymptotics} can be applied to compute the asymptotic covariances of functionals of $A$. Indeed, we have
\begin{align*}
	\E[A_x]&=\sqrt{2N}\E\left[\cos\left(k_1\langle x,\theta_1 \rangle+\phi_1 \right)\right]\\
	&=\sqrt{2N}\left(\E\left[\cos\left(k_1\langle x,\theta_1 \rangle \right)\right]\underbrace{\E\left[\cos(\phi_1)\right]}_{=0}-\E\left[\sin\left(k_1\langle x,\theta_1 \rangle \right)\right]\underbrace{\E\left[\sin(\phi_1)\right]}_{=0}\right)=0
\end{align*}
and
\begin{align*}
	{\rm Cov}(A_x,A_y)=\E\left[A_xA_y\right]&=2\E\left[\cos\left(k_1\langle x,\theta_1 \rangle+\phi_1 \right)\cos\left(k_1\langle y,\theta_1 \rangle+\phi_1 \right)\right]\\
	&=2\E\left[\cos\left(k_1\langle x,\theta_1 \rangle \right)\cos\left(k_1\langle y,\theta_1 \rangle \right)\right]\underbrace{\E\left[\cos^2(\phi_1)\right]}_{=1/2}\\
	&+2\E\left[\sin\left(k_1\langle x,\theta_1 \rangle \right)\sin\left(k_1\langle y,\theta_1 \rangle \right)\right]\underbrace{\E\left[\sin^2(\phi_1)\right]}_{=1/2}\\
	&-2\E\left[\cos\left(k_1\langle x,\theta_1 \rangle \right)\sin\left(k_1\langle y,\theta_1 \rangle \right)\right]\underbrace{\E\left[\cos(\phi_1)\sin(\phi_1)\right]}_{=0}\\
	&-2\E\left[\sin\left(k_1\langle x,\theta_1 \rangle \right)\cos\left(k_1\langle y,\theta_1 \rangle \right)\right]\underbrace{\E\left[\sin(\phi_1)\cos(\phi_1)\right]}_{=0}\\
	&=\E\left[\cos\left(k_1\langle x-y,\theta_1 \rangle \right)\right].
\end{align*}
Therefore, denoting by $K$ the covariance function of $A$, by the symmetry of $\theta_1$ we have
\[
K(z)=\E\left[\cos\left(k_1\langle z,\theta_1 \rangle \right)\right]=\E\left[e^{ik_1\langle z,\theta_1 \rangle}\right]=\int_{\R_+ \times S^{d-1}}e^{i \langle z,r\theta \rangle} (\mu\times \sigma) (dr,d\theta),
\]
implying that the isotropic (resp. spherical) spectral measure of $A$ is $\mu$ (resp. $\sigma$). By Proposition \ref{propnonradcov} the integral covariance function $w$ defined in \eqref{w} does not depend on the spherical spectral measure. In other words, whatever is the distribution of the random directions $\theta_1,\dots, \theta_N$ fixed in the definition \eqref{superpositions} of $A$, one can apply Theorem \ref{isoasymptotics}  with the isotropic covariance function of $A$, i.e.
\[
K_{\text{iso}}(z)=\int_{\R_+ \times S^{d-1}}e^{i \langle z,r\theta \rangle} (\mu\times \nu) (dr,d\theta),
\]
where $\nu$ is the uniform probability measure on $S^{d-1}$.
More precisely, whatever is the covariance function $K$, whatever is the spherical spectral measure $\sigma$, if $K_{\text{iso}}$ satisfies
\[
w^{\text{iso}}_t=\int_{\{|z|\le t\}}K_{\text{iso}}(z)dz=\ell(t)t^{\alpha}, \quad \quad \alpha\in(-1,d],
\]
with $\ell:\R_+\rightarrow \R$ slowly varying, we have for $D,L$ balls centered at the same point
(see \eqref{case2}), as $t\rightarrow\infty$
\begin{equation*}
	{\rm Cov}\left(\frac{\int_{tD}A_x dx}{t^{d/2}(w^{\text{iso}}_t)^{1/2}}, \frac{\int_{tL}A_x dx}{t^{d/2}(w^{\text{iso}}_t)^{1/2}}\right)\to \frac{1}{\omega_{d-1}}\int_{S^{d-1}}d\theta\int_0^\infty dl\left(-\frac{d}{dl}g_{D,L}(l\theta)\right)l^{\alpha}.
\end{equation*}
\end{example}
\section*{Acknowledgements}
I would like to thank the reviewers for the constructive remarks and useful suggestions.
I would like to thank my supervisor, Prof. Ivan Nourdin, for the patient guidance and advice provided during the writing of this article.
I acknowledge the support of the Luxembourg National Research Fund PRIDE17/1224660/GPS. 
\bibliographystyle{plain}

\begin{thebibliography}{10}
	
	\bibitem{Adler}
	R.J. Adler and J.E. Taylor (2007): {\it Random Fields and Geometry}. 
	Springer Monographs in Mathematics. Springer, 2007. 
	
	\bibitem{Anh}
	V. Anh, N. Leonenko and A. Olenko (2015): On the rate of convergence to {R}osenblatt-type distribution.
	{\it J. Math. Anal. Appl.} {\bf 425}, no. 1, 111-132
	
	\bibitem{ALS}
	F. Avram, N. Leonenko and L. Sakhno (2015): Limit theorems for additive functionals of stationary fields,
	under integrability assumptions on the higher order spectral
	densities.
	{\it Stochastic Process. Appl.} {\bf125}, no. 4, 1629--1652
	
	\bibitem{Ben Hariz}
	S. Ben Hariz (2002): Limit theorems for the non-linear functional of stationary
	Gaussian processes.
	{\it Journal of Multivariate Analysis} {\bf 80}, no. 2, 191-216
	
	\bibitem{Berry}
	M.V. Berry (1977): Regular and irregular semiclassical wavefunctions.
	{\it Journal of Physics A} {\bf 10}, no. 12, 2083-2091
	
	\bibitem{Bingham}
	N. H. Bingham (1987),  C. M. Goldie and J. L. Teugels: : {\it Regular variation}. 
	Encyclopedia of Mathematics and its Applications, Cambridge University Press.
	
	\bibitem{Brandolini}
	L. Brandolini, S.Hofmann and A. Iosevich (2003): Sharp rate of average decay of the Fourier transform of a bounded set. {\it Geom. funct. anal.} {\bf 13}, pp. 67-680.
	
	\bibitem{BM}
	P. Breuer and P. Major (1983): Central limit theorems fornon-linear functionals of Gaussian fields.
	{\it J. Mult. Anal.} {\bf 13}, pp. 425-441.
	
	\bibitem{CNN}
	S. Campese, I. Nourdin and D. Nualart (2020): Continuous Breuer-Major theorem: tightness and nonstationarity.
	{\it Ann. Probab.} {\bf 48}, no. 1, pp. 147--177.
	
	\bibitem{xxx}
	G. Dierickx, I. Nourdin, G. Peccati and M. Rossi (2020): Small Scale CLTs for the Nodal Length of Monochromatic Waves. 
	{\it Comm. Math. Phys.}, to appear
	
	\bibitem{DM}
	R.L. Dobrushin and P. Major (1979): Non-central limit theorems for nonlinear func-tionals of Gaussian fields.
	{\it Z. Wahrsch. Verw. Gebiete} {\bf 50}, no. 1, 27-52.
	
	\bibitem{DOV}
	I. Donhauzer, A. Olenko and A. Volodin (2022): Strong law of large numbers for functionals of random fields
	with unboundedly increasing covariances. {\it Comm. Statist. Theory Methods} {\bf 51}, no. 20, 6947--6962.
	
	\bibitem{Galerne} 
	B. Galerne (2011): Computation of the perimeter of measurable sets via their covariogram. Applications to random sets.
	{\it Image Anal. Stereol.} {\bf 30}, pp. 39-51.
	
	\bibitem{GMT}
	F. Grotto, L. Maini and A. P. Todino (2023+): Fluctuations of polyspectra in Euclidean and spherical random wave models. https://arxiv.org/abs/2303.09506
	
	\bibitem{KS}
	J. Kampf and E. Spodarev (2018): A functional central limit theorem for integrals of stationary
	mixing random fields.
	{\it Teoriya Veroyatnoste\u{\i} i ee Primeneniya} {\bf 63}, pp. 167-185.
	
	\bibitem{IL}
	A. V. Ivanov and N. Leonenko (1989): {\it Statistical Analysis of Random Fields}. 
	Mathematics and its Applications (MASS, volume 28), Springer
	
	\bibitem{krasikov}
	I. Krasikov (2014): Approximations for the Bessel and Airy functions with an explicit error term.
	{\it LMS J. Comput. Math.} {\bf 17}, no. 1, pp. 209-225.
	
	\bibitem{L}
	N. Leonenko and A. Olenko (2013): Tauberian and {A}belian theorems for long-range dependent
	random fields.
	{\it Methodol. Comput. Appl. Probab.} {\bf 15}, no. 4, pp. 715-742.
	
	
	\bibitem {LO}
	N. Leonenko and A. Olenko (2014): Sojourn measures of Student and Fisher-Snedecor random fields.
	{\it Bernoulli} {\bf 20}, pp. 1454--1483.
	
	\bibitem{LRM}
	N. Leonenko and M. D. Ruiz-Medina (2023): Sojourn functionals for spatiotemporal {G}aussian random
	fields with long memory.
	{\it J. Appl. Probab.} {\bf 60}, no.1, pp. 148--165.
	
	\bibitem{MN}
	L. Maini, I. Nourdin (2022): Spectral central limit theorem for additive functionals of isotropic and stationary Gaussian fields.  {\it Ann. Probab.}, to appear. https://arxiv.org/abs/2206.14458
	
	\bibitem{MY}
	Y. Mishura and N. Yoshidae (2022): Divergence of an integral of a process with small ball estimate. {\it Stochastic Process. Appl.} {\bf 148}, pp. 1--24
	
	
	\bibitem{bluebook}
	I. Nourdin and G. Peccati (2012): {\it Normal approximations with Malliavin calculus: from Stein's method to universality}. 
	Cambridge Tracts in Mathematics 192. Cambridge University Press, Cambridge, 2012. xiv+239 pp.
	
	\bibitem{NP}
	I. Nourdin and G. Peccati (2009): Stein's method on Wiener chaos. 
	{\it Probability Theory and Related Fields} {\bf 145}, pp. 75–118
	
	\bibitem{NPP}
	I. Nourdin, G. Peccati and M. Podolskij (2011): Quantitative Breuer-Major Theorems. {\it Stoch. Proc. Appl.} {\bf 121}, no. 4, 793-812
	
	\bibitem{NPR}
	I. Nourdin, G. Peccati and M. Rossi (2019): Nodal Statistics of Planar Random Waves. 
	{\it Comm. Math. Phys.}, {\bf 369}, pp. 99–151 
	
	\bibitem{NZ}
	D. Nualart and G. Zheng (2020): Oscillatory Breuer-Major theorem with application to the
	random corrector problem. {\it Asymptot. Anal.}, {\bf 119}, pp. 281--300
	
	\bibitem{PT}
	G.Peccati and C. A. Tudor (2005): Gaussian Limits for Vector-valued Multiple Stochastic Integrals. In Sem. de Probab. XXXVIII, volume 1857 of Lecture Notes
	in Math., pages 247–262. Springer, Berlin (2005)
	
	\bibitem{PV}
	G. Peccati and A. Vidotto (2020): Gaussian random measures generated by {B}erry's nodal sets.
	{\it Journal of Statistical Physics} {\bf 178}, no. 4, pp. 996--1027
	
	\bibitem{RST}
	A. Réveillac, M. Stauch and C. A. Tudor (2010): Hermite variations of the fractional Brownian sheet.{\it Stoch. Dyn.}{\bf 12}, 1--21
	Stochastics and Dynamics
	
	\bibitem{Rosenblatt}
	M. Rosenblatt (1961): Independence and dependence. In:Proc. {\it 4th Berkeley Sympos. Math. Statist. and Prob.} Vol. II, 431-443, Univ. California Press, Berkeley, Calif.
	
	\bibitem{Schoenberg}
	I. J. Schoenberg (1938): Metric Spaces and Completely Monotone Functions.
	{\it Annals of Mathematics}, Princeton University {\bf 39}, No. 4 (Oct., 1938), pp. 811-841
	
	\bibitem{Taqqu}
	M. Taqqu (1979): Convergence of integrated processes of arbitrary Hermite rank.
	{\it Z. Wahrsch. Verw. Gebiete} {\bf 50}, no. 1, pp. 53-83
	
	\bibitem{tudor-book}
	C.A. Tudor (2013): {\it Analysis of Variations for Self-similar Processes}. Springer
	
	\bibitem{TV}
	C. A. Tudor and F. G. Viens (2009): Variations and estimators for self-similarity parameters via Malliavin calculus.
	{\it Ann. Probab.} {\bf 37}, no. 6, pp. 2093--2134
	
	\bibitem{Vidotto}
	A. Vidotto (2021): A note on the reduction principle for the nodal length of
	planar random waves.
	{\it Statist. Probab. Lett.} {\bf 174}
	
	\bibitem{Wang}
	W. Wang (2007): Almost-sure path properties of fractional Brownian sheet.
	{\it Annales de l'Institut Henri Poincar\'{e}} {\bf 43}, no. 5, pp. 619-631

\end{thebibliography}

\end{document}